\newtheorem{Thm}{Theorem}[section]
\newtheorem{Lem}[Thm]{Lemma}
\newtheorem{exa}[Thm]{Example}
\newcommand{\ph}{\varphi}
\newcommand{\ta}{\tau_{\max}}
\newcommand{\ot}{\frac{1}{2}}
\newcommand{\into}{\int_\Omega}
\newcommand{\beq}{\begin{equation}}
\newcommand{\eeq}{\end{equation}}
\newcommand{\beqa}{\begin{eqnarray}}
\newcommand{\eeqa}{\end{eqnarray}}
\newcommand{\bmat}{\begin{bmatrix}}
\newcommand{\emat}{\end{bmatrix}}
\newcommand{\intom}{\int_{\Omega}}
\newcommand{\bv}{{\bf v}}
\newcommand{\bu}{{\bf u}}
\newcommand{\bV}{{\bf V}}
\newcommand{\bn}{{\bf n}}
\newcommand{\bP}{{\bf P}}
\newcommand{\bs}{{\bf s}}
\newcommand{\bzero}{\mathbf{0}}
\newcommand{\bone}{\mathbf{1}}
\newcommand{\bl}{\mathbf{l}}
\newcommand{\bx}{\mathbf{x}}
\newcommand{\bp}{\mathbf{p}}
\newcommand{\bj}{\mathbf{j}}
\newcommand{\bW}{\mathbf{W}}
\newcommand{\bal}{{\bm{\alpha}}}
\newcommand{\bb}{{\bm{\beta}}}
\newcommand{\bq}{\mathbf{q}}
\newcommand{\abs}[1]{\left | #1 \right |}
\newcommand{\cb}{}
\DeclarePairedDelimiter{\floor}{\lfloor}{\rfloor}
\newcolumntype{H}{>{\setbox0=\hbox\bgroup}c<{\egroup}@{}}
\title
{Sparse grid central discontinuous Galerkin method  for linear hyperbolic systems in high dimensions}
\author{
Zhanjing Tao
\thanks{Department of Mathematics, Michigan State University,
East Lansing, MI 48824 U.S.A.
 {\tt taozhanj@msu.edu}}
\and
Anqi Chen
\thanks{Department of Mathematics, Michigan State University,
East Lansing, MI 48824 U.S.A.
{\tt chenanq3@msu.edu}.}%
\and
Mengping Zhang
\thanks{School of Mathematical Sciences, University of Science and Technology of China, Hefei, Anhui, 230026 People's Republic of China. {\tt mpzhang@ustc.edu.cn.} Research supported by NSFC grant 11471305.}
\and
 Yingda Cheng
\thanks{Department of Mathematics, Department of  Computational Mathematics, Science and Engineering, Michigan State University,
East Lansing, MI 48824 U.S.A.
 {\tt ycheng@msu.edu}. Research is supported by NSF grants  DMS-1453661, DMS-1720023 and the Simons Foundation.}
}
\date{\today}
\begin{document}
\maketitle

\begin{abstract}
In this paper, we develop sparse grid central discontinuous Galerkin (CDG) scheme for linear hyperbolic systems  with variable coefficients in high dimensions. The scheme combines the CDG framework with the sparse grid approach, with the aim of breaking the curse of dimensionality. 
A new hierarchical representation of piecewise polynomials on the dual mesh is introduced and analyzed, resulting in a sparse finite element space that can be used for non-periodic problems. Theoretical results, such as $L^2$ stability and error estimates are obtained for scalar problems. CFL conditions are studied numerically comparing discontinuous Galerkin (DG), CDG, sparse grid DG and sparse grid CDG methods.    Numerical results including scalar linear equations, acoustic and elastic waves are provided.
\end{abstract}

\begin{keywords} central discontinuous Galerkin method; sparse grid; linear hyperbolic system; stability; error estimate
\end{keywords}

\section{Introduction}


In this paper, we develop sparse grid central discontinuous Galerkin (CDG) method for the following time-dependent linear hyperbolic system with variable coefficients 
\begin{equation}
\label{eq:model0}
\frac{\partial \mathbf{u}}{\partial t} + \sum_{i=1}^{d}\frac{\partial (A_i(t, \bx) \mathbf{u})}{\partial x_i} =\mathbf{0},\quad \bx\in\Omega,  
\end{equation}
subject to appropriate initial and boundary conditions. In the expression above, $d \ge 2$ is the spatial dimension of the problem,  $\mathbf{u}(t, \bx) = (u^1(t, \bx), \cdots, u^m(t, \bx))^T$ is the unknown function, $A_i(t, \bx) \in \mathbb{R}^{m \times m}, i=1, \ldots, d$ are the given smooth variable coefficients. We assume $\Omega=[0,1]^d$ in the paper, but the discussion can be easily generalized to arbitrary box-shaped domains.  The model \eqref{eq:model0} arises in many contexts \cite{leveque2002finite}, such as simulations of acoustic, elastic waves,  and Maxwell's equations in free space. The scheme we develop in this paper can also apply to the case when $A_i(t, \bx)$ is defined through another set of equations that can be nonlinearly coupled with $\bu$, such as the models in kinetic plasma waves and incompressible flows. 

Many numerical methods, including finite difference, finite volume, finite element, spectral methods \textit{etc.}, have been developed in the literature for \eqref{eq:model0} addressing different challenges in various applications. The focus of this paper, is to design a class of conservative numerical schemes, with high computational efficiency, for system \eqref{eq:model0} when $d$ is large. It is well known that any grid based method suffers from the curse of dimensionality \cite{bellman1961adaptive}. This term refers to the fact that the computational cost and storage requirements   scale as  $O(h^{-d})$ for a $d$-dimensional problem, where   $h$ denotes the mesh size in one coordinate direction, while the approximation accuracy is independent of $d.$   To overcome this bottleneck,  sparse grid methods \cite{zenger1991sparse, bungartz2004sparse, garcke2013sparse} were introduced to  reduce the degrees
of freedom for   high-dimensional numerical simulations. Sparse grid techniques have been incorporated in  collocation methods for   high-dimensional stochastic differential equations \cite{xiu2005high,xiu2007efficient,nobile2008sparse,ma2009adaptive}, finite element methods \cite{zenger1991sparse, bungartz2004sparse,schwab2008sparse}, finite difference methods \cite{griebel1998adaptive, griebel1999adaptive}, finite volume methods \cite{hemker1995sparse}, and spectral methods \cite{griebel2007sparse, gradinaru2007fourier,shen2010sparse,shen2010efficient} for   high-dimensional PDEs. 

In recent years, we initiate a line of research on the development of sparse grid discontinuous Galerkin (DG) methods \cite{sparsedgelliptic, guo2016sparse,guo2017adaptive}. The DG method \cite{Cockburn_2000_history} is a class
of finite element methods using  discontinuous approximation space for the numerical solution and the test functions. The Runge-Kutta DG scheme \cite{Cockburn_2001_RK_DG} developed in a series of papers for hyperbolic equations became very popular  due to its provable convergence, excellent conservation properties and accommodation for adaptivity and parallel implementations.  The sparse grid DG method designed in \cite{guo2016sparse} is well suited for time-dependent transport problems in high dimensions, reducing degrees of freedom of from $O(h^{-d})$ to $O(h^{-1}|\log_2h|^{d-1})$, maintaining conservation, with provable convergence rate of $O(|\log_2h|^{d}h^{{k+1/2}})$ in $L^2$ norm when the solution is smooth. Similar to  \cite{guo2016sparse}, in this paper, we restrict our attention to smooth solutions of \eqref{eq:model0}. It is known that for non-smooth solutions, adaptivity should be invoked to capture discontinuity like structures. This can be achieved using the idea in \cite{guo2017adaptive} and is left for our future work.

Based on the scheme constructed in \cite{guo2016sparse}, the goal of the present paper is  to design and analyze the sparse grid CDG method. The CDG schemes \cite{liu2005central,liu2007central,liu20082} are a class of DG schemes on overlapping cells that combine  the idea of the central schemes \cite{nessyahu1990non,kurganov2000new,liu2005central2} with the DG weak formulation. Such methods are intrinsically Riemann solver free, therefore no costly flux evaluations are needed in the computation. It is well known that the CDG schemes allow   larger CFL numbers than the standard DG methods except for piecewise constant approximations  \cite{liu2007central,reyna2015operator}. This compensates the increased cost caused by duplicate representation of the solution on the dual mesh. Motivated by this, we develop sparse grid CDG method that avoids the evaluation of numerical fluxes.   We investigate stability, convergence rate and CFL condition of the resulting scheme. A  novelty of this work is the design of the scheme for non-periodic problems, where a new hierarchical representation of the solution is presented, which results in a sparse finite element space that can be defined on the dual mesh. $L^2$ projection results are studied for this space, which helps the convergence proof of the schemes for initial-boundary value problems.  
  
 The rest of this paper is organized as follows:
in Section \ref{sec:method},  we  construct the sparse grid CDG formulations for periodic and non-periodic problems, and perform numerical study of the CFL conditions.  In Section \ref{sec:analysis}, we prove $L^2$ stability and error estimates for scalar  equations. The numerical performance is validated in Section \ref{sec:numerical} by   several benchmark tests, including scalar transport equations, acoustic and elastic waves. Conclusions and future work are discussed in Section \ref{sec:conclu}.

\section{Numerical methods}
\label{sec:method}

In this section, we   define and discuss the properties of the proposed sparse grid CDG methods. For convenience of notations, we
  rewrite \eqref{eq:model0} in a component-wise form as
\begin{equation}
\label{eq:model}
\frac{\partial u^l}{\partial t} + \nabla \cdot (A^l(t, \bx)  \mathbf{u} ) =0, \quad l = 1, \cdots, m, \quad \bx\in\Omega,
\end{equation}
where $A^l(t, \bx) = (A_1^l(t, \bx), \cdots, A_{d}^l(t, \bx))^T \in \mathbb{R}^{d \times m}$ denotes a collection of the $l$-th row of each matrix $A_i$. The problem is solved with given initial value $\mathbf{u}(0,\bx) = \mathbf{u}_0(\bx),$ and periodic or Dirichlet type boundary conditions.

We proceed as follows. First, we introduce the scheme for periodic problems. In this setting, the finite element space on the primal and dual mesh can be defined in similar ways. Then, we discuss the implementation details and perform numerical study of the CFL conditions. Finally, we consider the more complicated non-periodic problems, for which a new sparse  finite element space will be introduced on the dual mesh.

\subsection{Periodic problems}
\label{sec:periodic}

To define the sparse   finite element space, we first review the hierarchical decomposition of piecewise polynomial space in one dimension \cite{sparsedgelliptic}. Consider a general interval $[a,b]$,  we define the $n$-th level  mesh $\Omega_{n}([a,b])$ to be a uniform partition of $2^n$  cells with length $h_n = 2^{-n}(b-a)$ and $I_{n }^j =[a + j h_n, a + (j+1) h_n]$, $j=0, \ldots, 2^n-1,$ for any $n \ge 0.$ 
Let
$$V_{n }^k([a,b]):=\{v: v \in P^k(I_{n }^j),\, \forall \,j=0, \ldots, 2^n-1\}$$
be the usual piecewise polynomials of degree at most $k$ on   $\Omega_{n }$. Then, we have the nested structure $$V_{0 }^k([a,b]) \subset V_{1 }^k([a,b]) \subset V_{2 }^k([a,b]) \subset V_{3 }^k([a,b]) \subset  \cdots$$

Similar to \cite{sparsedgelliptic},  we can now define the multiwavelet subspace $W_{n }^k([a,b])$, $n=1, 2, \ldots $ as the orthogonal complement of $V^k_{n-1}([a,b])$ in $V^k_{n}([a,b])$ with respect to the $L^2$ inner product on $[a,b]$, i.e.,
\begin{equation*}
V_{n-1 }^k([a,b]) \oplus W_{n }^k([a,b])=V_{n }^k([a,b]), \quad W_{n }^k([a,b]) \perp V_{n-1 }^k([a,b]).
\end{equation*}
For notational convenience, we let
$W_{0 }^k([a,b]):=V_{0 }^k([a,b])$, which is the standard piecewise polynomial space of degree $k$ on $[a,b]$.  This gives the hierarchical decomposition  $V_{n }^k([a,b])$ on $\Omega_{n }$ as $V_{n }^k([a,b])=\bigoplus_{0 \leq l\leq n} W_{l }^k([a,b])$.

For a $d$ dimensional domain $[a,b]^d,$ we recall some basic notations about multi-indices. For a multi-index $\mathbf{\alpha}=(\alpha_1,\cdots,\alpha_d)\in\mathbb{N}_0^d$, where $\mathbb{N}_0$  denotes the set of nonnegative integers, the $l^1$ and $l^\infty$ norms are defined as 
$$
|\bal|_1:=\sum \nolimits_{i=1}^d \alpha_i, \qquad   |\bal|_\infty:=\max_{1\leq i \leq d} \alpha_i.
$$
The component-wise arithmetic operations and relational operations are defined as
$$
\bal \cdot \bb :=(\alpha_1 \beta_1, \ldots, \alpha_d \beta_d), \qquad c \cdot \bal:=(c \alpha_1, \ldots,  c \alpha_d), \qquad 2^\bal:=(2^{\alpha_1}, \ldots, 2^{\alpha_d}),
$$
$$
\bal \leq \bb \Leftrightarrow \alpha_i \leq \beta_i, \, \forall i,\quad
\bal<\bb \Leftrightarrow \bal \leq \bb \textrm{  and  } \bal \neq \bb.
$$

By making use of the multi-index notation, we denote by $\bl=(l_1,\cdots,l_d)\in\mathbb{N}_0^d$ the mesh level in a multivariate sense. We  define the tensor-product mesh grid $\Omega_{\bl }([a,b]^d)=\Omega_{l_1 }([a,b])\otimes\cdots\otimes\Omega_{l_d }([a,b])$ and the corresponding mesh size $h_\bl=(h_{l_1},\cdots,h_{l_d}).$ Based on the grid $\Omega_{\bl }$, we denote by $I_{\bl }^\bj=\{\bx:x_i\in I_{l_i}^{j_i}, i=1,\cdots,d\}$ as an elementary cell, and 
$$\bV_{\bl }^k ([a,b]^d):=\{v: v(\bx) \in Q^k(I^{\bj}_{\bl }), \,\,  \bzero \leq \bj  \leq 2^{\bl}-\bone \}= V^k_{l_1,x_1 }([a,b])\times\cdots\times  V_{l_d,x_d }^k([a,b])$$
as the standard tensor-product piecewise polynomial space on this mesh, where $Q^k(I^{\bj }_{\bl})$ denotes the collection of polynomials of degree up to $k$ in each dimension on cell $I^{\bj}_{\bl }$. 
If $\bl = (N,\cdots, N)$, the  grid and space will be  further denoted by  $\Omega_N([a,b]^d)$ and $\bV_{N }^k([a,b]^d)$, respectively.

Based on a tensor-product construction, the multi-dimensional increment space can be  defined as
$$\bW_\bl^k([a,b]^d)=W_{l_1,x_1}^k([a,b])\times\cdots\times  W_{l_d,x_d}^k([a,b]).$$ 
Therefore, we have  
$ 
\bV_{N }^k ([a,b]^d)  = \bigoplus_{\substack{ |\bl|_\infty \leq N\\\bl \in \mathbb{N}_0^d}} \bW_{\bl }^k([a,b]^d).
$ 
The sparse finite element approximation space we consider,    is  defined by
$$ \hat{\bV}_{N }^k([a,b]^d):=\bigoplus_{\substack{ |\bl|_1 \leq N\\\bl \in \mathbb{N}_0^d}}\bW_{\bl }^k([a,b]^d).$$
This is a subset of   $\bV_{N }^k([a,b]^d)$, and   its number of degrees of freedom  scales as $O((k+1)^d2^NN^{d-1})$ \cite{sparsedgelliptic}, which is significantly less than that of $\bV_N^k([a,b]^d)$ with exponential dependence on $Nd$. This is the key to computational savings in high dimensions.


The standard CDG schemes  \cite{liu2005central,liu2007central} is characterized by   numerical approximations on two sets of overlapping grids: primal and dual meshes. Now, we are ready to incorporate the sparse  finite element space defined above into the CDG framework. For the domain under consideration $\Omega=[0,1]^d,$ we let
  $\Omega_{N,P} := \Omega_{N}([0,1]^d)$ be the primal mesh and  $\Omega_{N,D}$, which is the periodic extension of $\Omega_{N}([-h_N/2,1-h_N/2]^d)$ restricted to $[0,1]^d$, be the dual mesh. Similarly, we let $\hat \bV_{N,P}^k := \hat \bV_{N}^k([0,1]^d)$ and $\hat \bV_{N,D}^k$ to be the periodic extension of $\hat \bV_{N}^k([-h_N/2,1-h_N/2]^d)$ restricted to $[0,1]^d$. Here and below, the subscripts $P$ and $D$ represent the quantities defined on the primal and dual mesh, respectively. 
  
  The approximation properties for the sparse finite element space have been established in previous work \cite{sparsedgelliptic,guo2016sparse}. By using a lemma  in \cite{guo2016sparse}, we can have  estimates for $L^2$ projection operator onto the spaces $\hat \bV_{N,P}^k,  \hat \bV_{N,D}^k.$ 
  
To facilitate the discussion, below we introduce some notations about norms and semi-norms. Let $G = P,D$, on primal or dual mesh $\Omega_{N,G}$,
we use $\|\cdot\|_{H^s(\Omega_{N,G})}$  to denote the standard broken Sobolev norm, i.e. $\|v\|^2_{H^s(\Omega_{N,G})}=\sum_{\bzero \le \bj \le 2^\mathbf{N}-\mathbf{1}} \|v\|^2_{H^s(I_{N,G}^\bj)},$ where $\|v\|_{H^s(I_{N,G}^\bj)}$ is the standard Sobolev norm on $I_{N,G}^\bj,$ (and $s=0$ is used to denote the $L^2$ norm). Similarly,  we use $|\cdot|_{H^s(\Omega_{N,G})}$ to  denote the broken Sobolev semi-norm, and $\|\cdot\|_{H^s(\Omega_{\bl,G})}, |\cdot|_{H^s(\Omega_{\bl,G})}$ to denote the broken Sobolev norm and semi-norm that are supported on a general grid $\Omega_{\bl,G}$.
For any set $L=\{i_1, \ldots i_r \} \subset \{1, \ldots d\}$, we define $L^c$ to be the complement set of $L$ in $\{1, \ldots d\}.$ For a non-negative integer $\alpha$ and set $L$,  we define the semi-norm on any domain denoted by $\Omega'$
\begin{flalign*}
|v|_{H^{\alpha,L}(\Omega')} :=  \left \| \left ( \frac{\partial^{\alpha}}{\partial x_{i_1}^{\alpha}} \cdots \frac{\partial^{\alpha}}{\partial x_{i_r}^{\alpha}}  \right ) v \right \|_{L^2(\Omega')},
\end{flalign*}
and
$$
|v|_{\mathcal{H}^{q+1}(\Omega')} :=\max_{1 \leq r \leq d} \left ( \max_{\substack{L\subset\{1,2,\cdots,d\} \\|L|=r}} |v|_{H^{q+1, L}(\Omega')} \right ),$$
which is the norm for the mixed derivative of $v$ of at most degree $q+1$ in each direction.
In this paper, we use the notation $A \lesssim B$  to represent $A \leq \text{constant}\times B$, where the constant  is independent of $N$ and the mesh level considered. 
The following results are obtained from  Lemma 3.2 in \cite{guo2016sparse}.

 \begin{Lem}[$L^2$ projection estimate]
\label{lem:ltproj}
Let $\mathbf{P}_P, \mathbf{P}_D$  be     $L^2$ projections  onto the spaces $\hat{\bV}_{N,P}^k, \hat{\bV}_{N,D}^k$, respectively, then for  $k \geq 1$,  $1 \leq q \leq \min \{p, k\}$, and  
$v \in \mathcal{H}^{p+1}(\Omega),$ which is periodic on $\Omega$, $N\geq 1$, $d \geq 2$, we have for $G=P, D,$
\begin{align}
\label{eq:plterr}
  | \bP_G v- v |_{H^s(\Omega_{N,G})}\lesssim \begin{cases}  N^d  2^{-N(q+1)} |v|_{\mathcal{H}^{q+1}(\Omega)}& s=0,\\
  2^{-Nq} |v|_{\mathcal{H}^{q+1}(\Omega)}& s=1.
  \end{cases}
  \end{align}
\end{Lem}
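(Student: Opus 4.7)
The plan is to derive both estimates from Lemma 3.2 of \cite{guo2016sparse}, which supplies exactly this bound on $[0,1]^d$ for the ``standard'' sparse DG space $\hat{\bV}_N^k([0,1]^d)$. For the primal case $G=P$, the mesh $\Omega_{N,P}$ and the space $\hat{\bV}_{N,P}^k$ are by definition the standard ones on $[0,1]^d$, so the estimate is the direct conclusion of that lemma without any modification.

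For the dual case $G=D$, I would reduce to the primal case by a translation argument. Let $\bs=(h_N/2,\ldots,h_N/2)$ and set $\tilde v(\bx):=v(\bx-\bs)$; because $v$ is periodic on $\Omega=[0,1]^d$, so is $\tilde v$, and the mixed-derivative broken seminorm is translation invariant, so $|\tilde v|_{\mathcal{H}^{q+1}(\Omega)}=|v|_{\mathcal{H}^{q+1}(\Omega)}$. The affine map $\bx\mapsto \bx-\bs$ carries the mesh $\Omega_N([0,1]^d)$ onto $\Omega_N([-h_N/2,1-h_N/2]^d)$; after the periodic extension and restriction to $[0,1]^d$ described in the paper, this is exactly the dual mesh $\Omega_{N,D}$, and the shifted sparse DG space coincides with $\hat{\bV}_{N,D}^k$. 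Since the $L^2$ projection commutes with a rigid translation of both the function and the approximation space, one has $(\bP_D v)(\bx)=(\bP_P\tilde v)(\bx-\bs)$ interpreted periodically. Applying Lemma 3.2 of \cite{guo2016sparse} to $\tilde v$ in the primal variables, and then undoing the translation in the broken norm on the left-hand side, yields both the $s=0$ and $s=1$ inequalities in \eqref{eq:plterr}.

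The main obstacle is bookkeeping rather than analysis: some dual cells of $\Omega_{N,D}$ cross $\partial\Omega$ and are wrapped by the periodic extension, so one must verify that the broken norms $|\cdot|_{H^s(\Omega_{N,D})}$ and the projection $\bP_D$ are well defined and invariant under the translation by $\bs$. This is exactly where periodicity of $v$ enters: each wrapped cell inherits a single-valued restriction of $v$, and the shift gives a cell-by-cell bijection between $\Omega_{N,D}$ and the standard mesh on $[-h_N/2,1-h_N/2]^d$ that preserves both local $H^s$ seminorms and the local polynomial structure of $\hat{\bV}_{N,D}^k$. Once this identification is made explicit, the lemma reduces to a single application of the known projection estimate on a translated but otherwise standard domain.
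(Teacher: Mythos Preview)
Your proposal is correct and matches the paper's approach: the paper simply states that the lemma is obtained from Lemma~3.2 in \cite{guo2016sparse} and gives no further argument, so your primal case is exactly that citation, and your translation argument for $G=D$ is the natural (and correct) way to read ``obtained from'' in the dual case. The only extra content you supply is the explicit bookkeeping for the periodically wrapped cells, which is sound and does not depart from the paper's intent.
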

This lemma shows that the $L^2$ norm and $H^1$ semi-norm of the projection error scale  like $O(N^d 2^{-N(k+1)})$  and $O(2^{-Nk})$ with respect to $N$ when the function $v$ has bounded mixed derivatives up to  enough degrees. This lemma will be used in Theorem \ref{thm:lterr} to establish convergence of the scheme.

%
%
%

\bigskip

Now, we are ready to formulate the sparse grid CDG scheme. Below we review some standard notations about jumps and averages of piecewise  functions. 
With $G=P$ or $D$, let $T_{h,G}$ be the collection of all elementary cell $I^{\bj}_{N,G}$,
$\Gamma_{N,G}:=\bigcup_{T \in \Omega_{N,G}} \partial T$ be the union of the interfaces for all the elements in $\Omega_{N,G}$ (here we have taken into account the periodic boundary condition when defining $\Gamma_{N,G}$) and $S(\Gamma_G):=\Pi_{T\in \Omega_{N,G}} L^2(\partial T)$ be the set of $L^2$ functions defined on $\Gamma_{N,G}$. For any $q \in S(\Gamma_{N,G})$ and $\bq \in [S(\Gamma_{N,G})]^d$,  we define their   averages $\{q\}, \{\bq\}$ and jumps $[q], [\bq]$ on the interior edges as follows. Suppose $e$ is an interior edge shared by elements $T_+$ and $T_-$, either on primal or dual mesh, we define the unit normal vectors $\bm{n}^+$ and  $\bm{n}^-$ on $e$ pointing exterior of $T_+$ and $T_-$, respectively, then
\begin{flalign*}
[ q] \  =\  \, q^- \bm{n}^- \, +  q^+ \bm{n}^+, & \quad \{q\} = \frac{1}{2}( q^- + 	q^+), \\
[ \bq] \  =\  \, \bq^- \cdot \bm{n}^- \, +  \bq^+ \cdot \bm{n}^+, & \quad \{\bq\} = \frac{1}{2}( \bq^- + \bq^+).
\end{flalign*}

The semi-discrete sparse grid CDG scheme for \eqref{eq:model}, based on the weak formulation introduced in \cite{liu2005central,liu2007central},  is defined as follows: we find $u^l_h\in\hat{\bV}_{N,P}^k$ and $v^l_h\in\hat{\bV}_{N,D}^k$, such that $\forall \,l = 1, \cdots, m$
\begin{align}  
\label{eq:DGformulation_p}
\int_{\Omega}(u^l_h)_t\,\ph_h\,d\bx =& \frac{1}{\ta}\int_{\Omega} (v^l_h-u^l_h) \,\ph_h\,d\bx  + \int_{\Omega} A^l(t, \bx) \mathbf{v}_h \cdot\nabla \ph_h\,d\bx - \sum_{\substack{e \in \Gamma_{N,P}}}\int_{e} A^l(t, \bx) \mathbf{v}_h\cdot  [\ph_h]   \,ds, \\
\label{eq:DGformulation_d}
\int_{\Omega}(v^l_h)_t\,\psi_h\,d\bx =& \frac{1}{\ta}\int_{\Omega} (u^l_h-v^l_h) \,\psi_h\,d\bx +  \int_{\Omega}  A^l(t, \bx) \mathbf{u}_h \cdot\nabla \psi_h\,d\bx - \sum_{\substack{e \in \Gamma_{N,D}}}\int_{e} A^l(t, \bx) \mathbf{u}_h \cdot [\psi_h]  \,ds,
\end{align}
for any $\ph_h \in \hat{\bV}_{N,P}^k$ and $\psi_h \in \hat{\bV}_{N,D}^k$, where $\bu_h = (u_h^1, \cdots, u_h^m), \bv_h = (v_h^1, \cdots, v_h^m)$ and $\ta$ is an upper bound for the time step due to the CFL restriction (see Section \ref{sec:cfl} for detailed discussions).


{\cb
\subsection{Discussions on implementations} Here, we briefly discuss some details about the implementation of the scheme.
We perform the computation by using orthonormal multiwavelet bases  constructed by Alpert \cite{alpert1993class}. In 1D, the bases of $W_{l }^k([0,1])$ are denoted by
$$
v_{p, l }^j(x), \quad p = 1,\cdots, k+1, \quad j = 0, \cdots, 2^{l-1}-1
$$
and they satisfy $\int_{a}^b v_{p, l }^j(x)v_{p', l' }^{j'} (x) dx = \delta_{p p'} \delta_{j j'} \delta_{l l'}$. Figures  \ref{fig:nonp_basis_k01}  and \ref{fig:nonp_basis_k11}  provide illustrations of the basis functions for $k=0, 1$ and $l=0,1,2.$ The bases in  $W_{\bl }^k$  in multi-dimensions are defined by tensor products
$$
v_{\bs}=v_{\bp,\bl }^\bj := \prod_{i=1}^d v_{p_i,l_i }^{j_i} (x_i), \quad p_i = 1, \cdots, k+1, \quad j_i = 0, \cdots, \max(0, 2^{l_i-1}-1),
$$
where we have used the notation $\bs=(\bl, \bj, \bp)$ and $s_i=(l_i, j_i, p_i)$ to denote the multi-index for the bases.

As for temporal schemes, we can use the  total variation diminishing  Runge-Kutta (TVD-RK) methods \cite{Shu_1988_JCP_NonOscill} to solve the ordinary differential  equations for the coefficients resulting from the   discretization.   
To calculate the right-hand-side of  \eqref{eq:DGformulation_p}-\eqref{eq:DGformulation_d}, the fast matrix-vector product by  LU split or LU decomposition algorithms \cite{shen2010efficient, shen2012efficient, rong2017nodal} can be applied, by which one can decompose all calculations into one dimensional operations. 
Below, we briefly describe the LU decomposition algorithm  for the calculation of the following matrix-vector product which appears at the right-hand-side of  \eqref{eq:DGformulation_p}-\eqref{eq:DGformulation_d}
$$b_{\bf j} = \sum_{\bs: |\bl|_1\le N} f_{\bf s} t_{s_1,j_1}^1 \cdots t_{s_d,j_d}^d,$$       
where $f_{\bf s}$ can be the coefficient of the basis   in sparse grid space and
$t_{s_i,j_i}^i, \, i=1, \cdots, d,$ are the corresponding one-dimensional transform of coefficients from basis $v_{s_i}$ to basis $v_{j_i}$ in the $i$-th dimension in our scheme. 
Note that we have $n=2^N (k+1)$ one-dimensional bases in each dimension, and  we use $v_{s_i}$ to denote the $s_i$-th basis. The bases are ordered according to grid increment. 
Using Algorithm 1 in \cite{shen2012efficient}, we should  calculate all the one-dimensional transform along each direction associated with a block lower triangular matrix, and then calculate all the one-dimensional transforms having a block upper triangular structure. 
 The fast matrix-vector product $f_{\bf s} \rightarrow b_{\bf j} $ on sparse grid with LU decomposition can be proceeded as follows.
\begin{enumerate}
	\item Calculate (block) LU decomposition 
	$t_{s,j}^{i} = \sum_{m=1}^{n} (Pl)^i_{s,m}(uQ)^i_{m,j}, \, s,j= 1, \cdots, n,$ for $i=1, \cdots, d,$ where   $P^i, Q^i$ are the permutation matrices, $l^i, u^i$ are lower and upper triangular matrices.
	\item Compute the transform with a (block) lower triangular matrix for $i=1, \cdots, d,$
	
	$b_{s_1,\cdots, s_{i-1},s_i^{'},s_{i+1},\cdots, s_{d}} \leftarrow \sum_{s_i: l_1+ \cdots +l_{d} \le N} f_{\bf s} (Pl)_{s_i,s_i^{'}}^i.$  	
	\item Compute the transform with a (block) upper triangular matrix for $i=1, \cdots, d,$
	
	$ b_{\bf s} \leftarrow  \sum_{s_i^{'}: l_1+ \cdots +l_{i-1} + l_i^{'} +l_{i+1} + \cdots +l_{d}  \le N} b_{s_1,\cdots, s_{i-1},s_i^{'},s_{i+1},\cdots, s_{d}} (uQ)_{s_i^{'},s_i}^i.$

\end{enumerate}       

Note that in step 1, the LU decomposition pivots only from rows or columns in the same mesh level
to maintain the hierarchical structure. This pivoting can be successfully done in the sparse grid CDG scheme, but not in the sparse grid DG scheme, for which additional splitting of the flux terms are deemed necessary for variable coefficient case.

For the integrals involving variable-coefficient, we use Gaussian quadrature to compute these terms. Since these integrals are
multi-dimensional integrations, we use the so-called unidirectional principle to separate the integration into multiplication of one-dimensional integrals. For example, if $\phi(x)=\phi_1(x_1) \cdots\phi_d(x_d)$ is separable,
$$\int_{\Omega}\phi(x) = \int_{[a,b]}\phi_1(x_1) \cdots \int_{[a,b]}\phi_d(x_d).$$
When the variable coefficient $A_i(t,x)$ is separable, we can use unidirectional principle directly. If it is not separable, we can find $A_i^h(t,x)$ as the $L^2$ projection of $A_i(t,x)$ onto the sparse grid finite element space, and then use  $A_i^h(t,x)$ to compute the integrals. 

}

\subsection{Discussions on CFL conditions}
\label{sec:cfl}

It is well known that the CDG schemes allow   larger CFL numbers than the standard DG methods except for piecewise constant approximations  \cite{liu2007central,reyna2015operator}. Here, we perform a numerical study of the  CFL conditions of DG \cite{Cockburn_2001_RK_DG}, CDG \cite{liu20082}, sparse grid DG \cite{guo2016sparse}, and the sparse grid CDG schemes. We only consider the two-dimensional case solving constant coefficient equation $u_t+u_{x_1}+u_{x_2}=0$ for now. The results are listed in Table \ref{table:linear_cfl}. The CFL number of DG method is obtained from Table 2.2 in \cite{Cockburn_2001_RK_DG}. The rest of the table is computed by eigenvalue analysis of the discretization matrix, and by requiring the amplification of the   eigenvalues to be bounded by 1 in magnitude. We observe
that the sparse grid DG method has  CFL
number that is about two times the CFL number of the standard DG method. The sparse grid CDG method offers the largest CFL conditions among all four methods. Here, as a side note, we find that the CFL number for two-dimensional CDG method is larger than the CFL number for one-dimensional CDG method in  \cite{liu20082}. This table shows that one advantage of   the sparse grid CDG method is the ability to take large time steps for time evolution problems. In general, further numerical results suggest that for equation    $u_t+c_1u_{x_1}+c_2u_{x_2}=0,$ the CFL number for sparse grid DG and sparse grid CDG method will change with the value of the coefficients $c_1, c_2.$  Results in higher dimensions are yet to be studied. A preliminary calculation shows that for equation  $u_t+u_{x_1}+u_{x_2}+u_{x_3}=0$ the CFL conditions for CDG,  sparse grid DG and sparse grid CDG methods in 3D are all higher than those for the 2D case in Table \ref{table:linear_cfl}. The sparse grid CDG method still possesses the largest CFL number among all four methods.  Those interesting issues will be investigated in our future work.

\begin{table}
	\caption{CFL numbers of the   DG method, CDG method, sparse grid DG method and sparse grid CDG method with piecewise degree $k$ polynomials, Runge-Kutta method of order $\nu$ for Example \ref{ex:linear} with d=2. The CFL numbers of the sparse grid DG/CDG methods are measured with regard to the most refined mesh $h_N.$}
	\centering
	\begin{tabular}{|c |c c c|  c  c c|  c  c c|  c  c c|}
		
		\hline
		
		&        \multicolumn{3}{c|}{DG}          &     \multicolumn{3}{c|}{ CDG}    
		&     \multicolumn{3}{c|}{sparse grid DG}   &     \multicolumn{3}{c|}{sparse grid CDG}     \\
		\hline
		$k$ & 1& 2 & 3&  1 & 2 & 3 & 1& 2 & 3 & 1& 2 & 3 \\
		\hline
		$\nu=2$&0.33  & --	&	--   &	0.48	 &--	& -- & 0.66  & --	&	--   &	0.87	 &--	& --	 	\\
		\hline
		$\nu=3$&0.40  & 0.20	&	0.13   &	0.66	 & 0.36	& 0.24 & 0.81  & 0.41	&	0.25   &	1.17	 & 0.65	& 0.44 	 	\\
		\hline
		$\nu=4$&0.46  & 0.23	& 0.14   &	0.90	 & 0.52	& 0.35 & 0.92  & 0.46	& 0.28   &	1.58	 & 0.94	& 0.62	 	\\
		\hline
		
	\end{tabular}
	\label{table:linear_cfl}
\end{table}

\subsection{Non-periodic problems}
Here, we consider non-periodic problems, where  equation \eqref{eq:model0} or \eqref{eq:model} is supplemented by   Dirichlet boundary condition on the inflow edges.  In this case, we can no longer use periodicity to define the finite element space on the dual mesh, and a new grid hierarchy needs to be introduced.

  Recall that for standard CDG methods with non-periodic boundary condition  on the domain $[0,1],$ the finite element space on dual mesh with cell size $h_n=1/2^n$ is represented by
\begin{equation}
\label{eq:sd}
 V_{n,D}^k= \{v: v \in P^k(I_{n,D}^{j}),\, \forall \,j=0, \ldots, 2^n \},
 \end{equation}
where the mesh is partitioned as $$I^0_{n, D}=[0,\frac{h_n}{2}], \quad I^j_{n,D}=[(j-\ot)h_n, (j+\ot)h_n], \,j=1, \ldots, 2^n-1, \quad I^{2^n}_{n,D}=[1-\frac{h_n}{2}, 1],$$
which consists of $2^n-1$ cells of size $h_n,$  and two cells at the left and right ends of size $h_n/2.$ It is easy to see that this space does not  have nested structures, i.e. $V_{n-1,D}^k \not \subset V_{n,D}^k.$ Therefore, we need a new hierarchy to define the increment polynomial spaces.

 For a fixed refined mesh level $N,$ we define the following  grid $\Omega_{l,N,D}$ on level $l, \,l=0 \ldots N,$ by a collection of cells as
$$
I^0_{l,N,D}=[0,h_l-\frac{h_N}{2}], \quad I^j_{l,N,D}=[j h_l-\frac{h_N}{2}, (j+1)h_l-\frac{h_N}{2}], \,j=1, \ldots, 2^l-1, \quad I^{2^l}_{l,N,D}=[1-\frac{h_N}{2}, 1],
$$
which consists of $2^l-1$ cells of size $h_l,$  and a cell at the left end of size $h_l-\frac{h_N}{2},$ and a cell at the right end of size $\frac{h_N}{2}.$ This grid structure is naturally nested, and therefore $V^k_{l,N,D}$ which consists of piecewise polynomials of degree $k$ defined on $\Omega_{l,N,D}$ are also nested, and $V^k_{N,N,D}=V_{N,D}^k$ as defined in \eqref{eq:sd}.

Then the definitions of sparse   finite element space in Section \ref{sec:periodic} can be naturally extended here.  We let $W_{l,N,D}^k$, $l=1, \ldots N $ be a complement set of $V_{l-1,N,D}^k$ in $V_{l,N,D}^k,$ i.e.  \begin{equation*}
V_{l-1,N,D}^k \oplus W_{l,N,D}^k=V_{l,N,D}^k.
\end{equation*}
However, we no longer require $W_{l,N,D}^k$ to be $L^2$ orthogonal to $V_{l-1,N,D}^k$, because such definition will be difficult to implement in practice. Instead, we define $W_{l,N,D}^k$ to be a span of basis functions that are shifted basis functions of $W_l^k$ space defined in Section \ref{sec:periodic}, namely,
 $$
 W_{l,N,D}^k = W_l^k([-\frac{h_N}{2},1-\frac{h_N}{2}]) \big \rvert_{[0,1]}, \quad l \geq 1.
$$
By denoting $W_{0,N,D}^k=V_{0,N,D}^k,$ we have decomposed $V^k_{N,D}=\bigoplus_{0 \leq l\leq N} W_{l,N,D}^k.$ Illustration of basis functions by such definitions for $k=0, 1$ and $l=0, 1, 2$ can be found in Figures \ref{fig:nonp_basis_k02} and \ref{fig:nonp_basis_k12}. The dimension of $W_{0,N,D}^k$ is $2(k+1),$ while the dimensions of $W_{l,N,D}^k, \, l=1, \ldots N$ are $2^{l-1}(k+1).$

Finally, the sparse  finite element space on the dual mesh of domain $[0,1]^d$ is defined as
$$  \hat {\tilde {\bV}}_{N,D}^k:=\bigoplus_{\substack{ |\bl|_1 \leq N\\\bl \in \mathbb{N}_0^d}}\bW_{\bl,N,D}^k,$$
where $\bW_{\bl,N,D}^k=W_{l_1,N,D,x_1}^k\times\cdots\times  W_{l_d,N,D,x_d}^k.$ This is a subset of  the full grid space $\bV_{N,D}^k=\bigoplus_{\substack{ |\bl|_\infty \leq N\\\bl \in \mathbb{N}_0^d}}\bW_{\bl,N,D}^k$, and its number of degrees of freedom  scales as $O(2^{d-1}(k+1)^d2^NN^{d-1})$ (the proof is similar to Lemma 2.3 in \cite{sparsedgelliptic}), which is {\cb larger than that of $\hat \bV_{N,P}^k$,} but still significantly less than that of $\bV_{N,D}^k$ with exponential dependence on $Nd$.


We will now investigate the approximation property of the space $ \hat {\tilde {\bV}}_{N,D}^k.$ We can obtain the following result, which essentially states that the $L^2$ projection onto this newly constructed space has the same order of accuracy as $\mathbf{P}_P, \mathbf{P}_D$ in Lemma \ref{lem:ltproj}.

\begin{Lem} [$L^2$ projection estimate  onto  $ \hat {\tilde {\bV}}_{N,D}^k$ ]
\label{lem:l22}
Let $\mathbf{\tilde P}_D$  be   the   $L^2$ projection  onto the space $\hat{ \tilde \bV}_{N,D}^k$, then for  $k \geq 1$, $1 \leq q \leq \min \{p, k\}$, and 
$v \in \mathcal{H}^{p+1}(\Omega)$, $N\geq 1$, $d \geq 2$,  we have
\beq
  \label{eq:qlterr}
    | \mathbf{\tilde P}_D v- v |_{H^s(\Omega_{N,D})}\lesssim \begin{cases}  N^d  2^{-N(q+1)} |v|_{\mathcal{H}^{q+1}(\Omega)}& s=0,\\
  2^{-Nq} |v|_{\mathcal{H}^{q+1}(\Omega)}& s=1.
  \end{cases}
\eeq
\end{Lem}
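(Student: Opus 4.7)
The plan is to prove Lemma \ref{lem:l22} by mirroring the proof of Lemma \ref{lem:ltproj} (Lemma 3.2 of \cite{guo2016sparse}), modified to accommodate two new features: (i) the decomposition $V^k_{l-1,N,D} \oplus W^k_{l,N,D} = V^k_{l,N,D}$ is not $L^2$-orthogonal, and (ii) the dual mesh on $[0,1]$ has small boundary cells. The key structural observation is that for $l \ge 1$ the 1D increment basis of $W_{l,N,D}^k$ is precisely the restriction to $[0,1]$ of the standard orthogonal hierarchical basis of $W_l^k([-h_N/2, 1-h_N/2])$, so away from the right boundary cell $[1-h_N/2,1]$ the approximation theory of the periodic/shifted case applies verbatim.

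The first step is to construct an explicit interpolant $\tilde{\bI}_N v \in \hat{\tilde{\bV}}_{N,D}^k$ achieving the target rates. One extends $v$ to $\tilde v$ on $\Omega_{\text{ext}} := [-h_N/2, 1-h_N/2]^d$ by a tensor-product reflection (or Stein-type) extension preserving the mixed-derivative seminorm, $|\tilde v|_{\mathcal{H}^{q+1}(\Omega_{\text{ext}})} \lesssim |v|_{\mathcal{H}^{q+1}(\Omega)}$, applies Lemma \ref{lem:ltproj} in its shifted form to obtain a sparse grid approximant $\mathbf{P}_{\text{shift}} \tilde v$, and restricts to $[0,1]^d$. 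On the boundary slabs where some $x_i \in [1-h_N/2,1]$, the interpolant is completed using a degree-$k$ polynomial in the relevant direction drawn from $W_{0,N,D}^k$; since these slabs have width $h_N/2$, polynomial approximation there contributes $O(h_N^{q+1})$ in $L^2$ and $O(h_N^q)$ in broken $H^1$, both dominated by the claimed rates. The $s=0$ case of \eqref{eq:qlterr} then follows at once from the best-approximation property of the $L^2$ projection, $\|v - \mathbf{\tilde P}_D v\|_{L^2} \le \|v - \tilde{\bI}_N v\|_{L^2}$.

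The main obstacle is the $s=1$ case, since the $L^2$ projection is not automatically quasi-optimal in broken $H^1$, and naively estimating $|\mathbf{\tilde P}_D v - \tilde{\bI}_N v|_{H^1}$ by a cellwise inverse inequality against the $L^2$ bound introduces an extraneous factor $h_N^{-1}\cdot N^d$. To recover the clean $2^{-Nq}$ rate, the plan is to imitate the level-by-level tensor-product argument underlying Lemma \ref{lem:ltproj} directly for the error $\mathbf{\tilde P}_D v - v$. On the shifted $L^2$-orthogonal hierarchy, this error decomposes into multi-level contributions with explicitly controlled broken $H^1$ seminorms; restriction to $[0,1]^d$ preserves all interior-cell contributions, and the separate treatment of the boundary slabs via the coarse level $W_{0,N,D}^k$ absorbs the otherwise troublesome boundary terms, thereby avoiding any logarithmic loss.
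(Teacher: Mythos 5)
Your $s=0$ argument (Stein/reflection extension to $[-h_N/2,1-h_N/2]^d$, the shifted sparse projection, restriction, patching the width-$h_N/2$ boundary slabs from level $0$, then best approximation) is a legitimate alternative route to the first half of the lemma, and it is genuinely different from the paper, which never constructs an interpolant. Two details you gloss over there: on a slab $\{x_i\in[1-h_N/2,1]\}$ a degree-$k$ polynomial in $x_i$ alone does not suffice — the sparse space forces the slab component to be a polynomial in $x_i$ tensored with a $(d-1)$-dimensional sparse grid function in the remaining variables (and similarly on corner regions), so you need the cross-directional sparse approximation error there as well; and Lemma \ref{lem:ltproj} is stated for periodic $v$, so you must invoke the non-periodic box version from \cite{guo2016sparse} for $\mathbf{P}_{\mathrm{shift}}\tilde v$.

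The genuine gap is in the $s=1$ case, which is where the real content of this lemma lies. You correctly observe that best approximation fails in broken $H^1$ and that an inverse inequality loses a factor, but your fix — ``on the shifted $L^2$-orthogonal hierarchy, this error decomposes into multi-level contributions \ldots restriction to $[0,1]^d$ preserves all interior-cell contributions'' — does not apply to $\mathbf{\tilde P}_D v - v$: the $L^2$ projection onto $\hat{\tilde{\bV}}_{N,D}^k$ is not the restriction of the shifted-domain projection (restriction and $L^2$ projection do not commute), so the orthogonal multi-level decomposition on $[-h_N/2,1-h_N/2]^d$ says nothing directly about the error of $\mathbf{\tilde P}_D$. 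What is actually needed — and what the paper proves as its Step 1 — is the identity $\mathbf{\tilde P}_D=\sum_{|\bl|_1\le N}Q^k_{l_1,N,D,x_1}\otimes\cdots\otimes Q^k_{l_d,N,D,x_d}$ with increment projections $Q^k_{l,N,D}=P^k_{l,N,D}-P^k_{l-1,N,D}$ built on the new nested dual grids $\Omega_{l,N,D}$ on $[0,1]$ itself. This identity is not automatic here precisely because $W^k_{l,N,D}$ is \emph{not} the orthogonal complement of $V^k_{l-1,N,D}$; one must check that $Q^k_{l,N,D}\phi\perp V^k_{l-1,N,D}$ and that each tensor increment still lands in $\hat{\tilde{\bV}}_{N,D}^k$. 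Once that is in hand, both $s=0$ and $s=1$ follow from cell-wise bounds on $Q^k_{l,N,D}v$, where the boundary cells need separate care (the increment vanishes identically on the fixed right cell $[1-h_N/2,1]$, and the left cell of size $h_l-h_N/2$ contributes at the rate $2^{-l(q+1-s)}$). Without this decomposition your proposal has no mechanism for the clean $2^{-Nq}$ bound in $H^1$.
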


\begin{proof}
The proof follows same procedure as Appendix A in  \cite{guo2016sparse}. We will mainly highlight the difference in the proof (see Steps 1 and 2 below). The main difference lies in the fact that all the hierarchical spaces (and associated projections)   have dependence not only on $l,$ but also on the finest mesh level $N.$

\textbf{Step 1}: Decomposition of $\mathbf{\tilde P}_D$  into tensor products of one-dimensional increment projections.
We denote $P_{l,N,D}^k$ as the standard $L^2$ projection operator from $L^2([0,1])$ to $V_{l,N,D}^k$, and the induced increment projection
$$
Q_{l,N,D}^k : = \begin{cases} P_{l,N,D}^k - P_{l-1,N,D}^k, & \text{if} \ l=1, \ldots N, \\		P_{0,N,D}^k, 	& \text{if} \ l = 0,\end{cases}
$$
and further denote
$$
 {\tilde{\mathbf{P}}}_{N,D}^k := \sum_{\substack{ |\bl|_1\leq N \\ \bl \in \mathbb{N}_0^d}} Q_{l_1,N,D,x_1}^k \otimes \cdots \otimes Q_{l_d,N,D,x_d}^k,
$$
where the last subindex of ${Q}^k_{l_i, N, D,x_i}$ indicates that the increment operator is defined in $x_i$-direction. We can verify that $\mathbf{\tilde P}_D=  {\tilde{\mathbf{P}}}_{N,D}^k.$ In fact, for any $v$, it's clear that $ {\tilde{\mathbf{P}}}_{N,D}^k v\in  \hat {\tilde {\bV}}_{N,D}^k$. Therefore, we only need
\begin{equation}
\label{eq:portho}
\int_\Omega ({\tilde{\mathbf{P}}}_{N,D}^k v -v) w \, d\bx=0, \qquad \forall \, w \in  \hat {\tilde {\bV}}_{N,D}^k.
\end{equation}
It suffices to show \eqref{eq:portho} for $v \in C^\infty(\Omega)$ which is a dense subset of $L^2(\Omega)$. In fact, we have 
 $$v = \bP_{N,D}^k v + v - \bP_{N,D}^k v,$$ where $\bP_{N,D}^k = P_{N,N,D,x_1}^k \otimes \cdots \otimes P_{N,N,D,x_d}$ is the $L^2$ projection onto the full grid space $\bV_{N,D}^k.$ Therefore,
\begin{align*}
\into  ({\tilde{\mathbf{P}}}_{N,D}^k v - v)w d\bx & = \into  ({\tilde{\mathbf{P}}}_{N,D}^k v - \bP_{N,D}^k v)w d\bx + \into  (v - \bP_{N,D}^k v)w d\bx 	\\
& = -\into (\sum_{\substack{  |\bl|_{\infty} \leq N, |\bl|_1>N \\ \bl \in \mathbb{N}_0^d}} {Q}^k_{l_1, N, D, x_1} \otimes \cdots \otimes {Q}^k_{l_d, N, D, x_d} v  ) \,w \, d\bx.
\end{align*}
The last term in the first row of the equality above vanishes because $w \in \hat {\tilde {\bV}}_{N,D}^k \subset \bV_{N,D}^k.$
  In addition, for any $l \geq 1$, $\phi \in L^2([0,1]), \varphi \in V_{l-1,N,D}^k$
$$
\int_{[0,1]} Q_{l,N,D}^k \phi \, \varphi dx = \int_{[0,1]} (I-P_{l-1,N,D}^k) \phi \, \varphi dx-\int_{[0,1]} (I-P_{l,N,D}^k) \phi \, \varphi dx = 0,
$$
Therefore, by properties of the tensor product projections
$$
\into  ({\tilde{\mathbf{P}}}_{N,D}^k v - v)w d\bx = 0, \quad \forall w\in \hat {\tilde{\bV}}_{N,D}^k,
$$
and the proof for $\mathbf{\tilde P}_D=  {\tilde{\mathbf{P}}}_{N,D}^k$ is complete.

\textbf{Step 2}: Estimation of the increment projections.
For a function $v \in  H^{p+1}([0,1])$, we have the convergence property of the $L^2$ projection $P_{l,N,D}^k$ as follows: for any integer $q$ with $1 \leq q \leq \min\{p, k\}$, $s=0,\,1$,
\begin{align*}
\label{eq:proj1a}
| P_{l,N,D}^k v- v|_{H^s(I^j_{l,N,D})} 	& \leq c_{k,s,q}  (h^j_{l,N})^{(q+1-s)} |v|_{H^{q+1}(I^j_{l,N,D})},\quad j=1,\cdots,2^{l}-1,
\end{align*}
where the mesh size $h_{l,N}^j = \begin{cases} h_l - h_N/2,	& j = 0 \\h_{l}, & j = 1, \cdots, 2^l -1, 	\\ h_N/2, 	& j = 2^l. \end{cases}$

The estimation above directly applies for $Q_{0,N,D}^k = P_{0,N,D}^k$. For $l \geq 1$,  by simple algebra, we have
\begin{align*}
| Q_{l,N,D}^k v  |_{H^s(I^j_{l,N,D})} & \leq \tilde c_{k,s,q}  2^{-l(q+1-s)} |v|_{H^{q+1}(I^{\floor{j/2}}_{l-1,N,D})},	 \quad j = 2, \cdots, 2^l - 1,\\
| Q_{l,N,D}^k v  |_{H^s(I^j_{l,N,D})} & \leq c_{k,s,q}  (h_l)^{(q+1-s)} |v|_{H^{q+1}(I^j_{l,N,D})} + c_{k,s,q} (h_{l-1}-h_N/2)^{(q+1-s)} |v|_{H^{q+1}(I^0_{l-1,N,D})}, \quad j= 0,1, 		\\
					& < \tilde c_{k,s,q}  2^{-l(q+1-s)} |v|_{H^{q+1}(I^0_{l-1,N,D})},	\\
| Q_{l,N,D}^k v  |_{H^s(I^{2^l}_{l,N,D})} & =0,
\end{align*}
with $\tilde c_{k,s,q} = c_{k,s,q} (1 + 2^{q+1-s})$. 

The rest of the proof is then very similar to Appendix A in  \cite{guo2016sparse}, and is omitted.
\end{proof}


\bigskip
We now provide a numerical validation of Lemma \ref{lem:l22} by considering the   error of projection $\mathbf{\tilde P}_D$ for a smooth function 
\begin{equation}
\label{ufunc}
u(\mathbf{x})=\exp\left(\prod_{i=1}^d x_i\right),\quad \mathbf{x}\in[0,1]^d.
\end{equation}
In Table \ref{table:proj_dual}, we report the $L^2$ errors and the associated orders of accuracy for $k=1,2,3,\, d=2,3.$ It is clear that the predicted order of accuracy is achieved.

\begin{table}[htp]
	
	\caption{$L^2$ errors and orders of accuracy for $L^2$ projection operator $\mathbf{\tilde P}_D$  of \eqref{ufunc} onto $\hat {\tilde {\bV}}_{N,D}^k$ when $d=2$ and $d=3$. 
		$N$ is the number of mesh levels,   $k$ is the polynomial order, $d$ is the dimension.  $L^2$ order is calculated with respect to  $h_N$. 
	}
	\centering
	\begin{tabular}{|c|c|c c|c c|c c|}
		\hline
		& & $L^2$ error & order& $L^2$ error & order & $L^2$  error & order\\
		\hline
		$N$& $h_N$&   \multicolumn{2}{c|}{$ k=1$}    &     \multicolumn{2}{c|}{$ k=2$}  &     \multicolumn{2}{c|}{$ k=3$}  \\
		\hline
		& &       \multicolumn{6}{c|}{$d=2$}     \\
		\hline
		3 &$1/8$  &  8.93E-04	&   --       &      9.14E-06	&	--     &	6.40E-08	&	--   \\
		4 &$1/16$ &	 2.61E-04	&	1.77     &	 	1.29E-06	&	2.82   &	4.45E-09	&	3.85   \\
		5 &$1/32$ &  7.34E-05	&	1.83     &	 	1.77E-07	&	2.87   &	3.01E-10	&	3.89   \\
		6 &$1/64$ &	 2.00E-05	&	1.88     &	 	2.37E-08	&	2.90   &	1.98E-11	&	3.93   \\
		7 &$1/128$&	 5.35E-06   &	1.90     &	 	3.11E-09	&	2.93   &	1.29E-12	&	3.94  \\

		\hline
		& &       \multicolumn{6}{c|}{$d=3$}     \\
		\hline
		3 &$1/8$  &	 6.19E-04	&   --	     &     4.93E-06	&	--	   &	3.18E-08	&	--	     \\
		4 &$1/16$ &  1.90E-04	&	1.70     &     7.45E-07	&	2.73   &	2.36E-09	&	3.75   \\
		5 &$1/32$ &  5.71E-05	&	1.73     &     1.10E-07	&	2.76   &	1.69E-10	&	3.80   \\
		6 &$1/64$ &  1.67E-05	&	1.77     &     1.58E-08	&	2.80   &	1.18E-11	&	3.84  \\
		7 &$1/128$&	 4.80E-06   &	1.80     &     2.24E-09	&	2.82   &	9.35E-13	&	3.66  \\
		\hline
		
	\end{tabular}
	\label{table:proj_dual}
\end{table}

\bigskip

With the aid of this space, the semi-discrete scheme can now be defined similarly as in \eqref{eq:DGformulation_p}-\eqref{eq:DGformulation_d} by using the space on the dual mesh as  $ \hat {\tilde {\bV}}_{N,D}^k,$ and replacing the numerical values on the boundary of the domain by corresponding functions in the Dirichlet boundary conditions.

We now comment on the implementation of this algorithm. As can be seen from Figures \ref{fig:nonp_basis_k02} and \ref{fig:nonp_basis_k12}, there are two types of basis functions in 1D for the dual space.
\begin{itemize}
\item Type 1 bases (for $ l\geq0$),  which are the shifted and truncated multiwavelet bases.
\item Type 2 bases (for $ l=0$), which are the Legendre polynomials of degree up to $k$ on $[1-\frac{h_N}{2}, 1]$.
\end{itemize}
Clearly, Type 1 bases are orthogonal to Type 2 bases, because their support do not overlap.  Type 2 bases are orthogonal to each other due to the definition of Legendre polynomials. However, Type 1 bases are no longer orthogonal to each other, due to the domain shift and truncation. However, only the left-most element on each level are changed. For other bases in that level, they will still retain orthogonality.   The bases on left-most element in all level are orthogonal to other bases, but not to each other, i.e., the bases defined on left-most element in different levels are not orthogonal. 
 This implies that although the mass matrix is not identity here, it will have  block structures and be sparse.

\begin{figure}[h!]
	\centering
	\subfigure[Primal mesh. Number of bases for $l=0, 1, 2$ are $1,1,2.$]{		\label{fig:nonp_basis_k01} \includegraphics[width=0.48\textwidth]{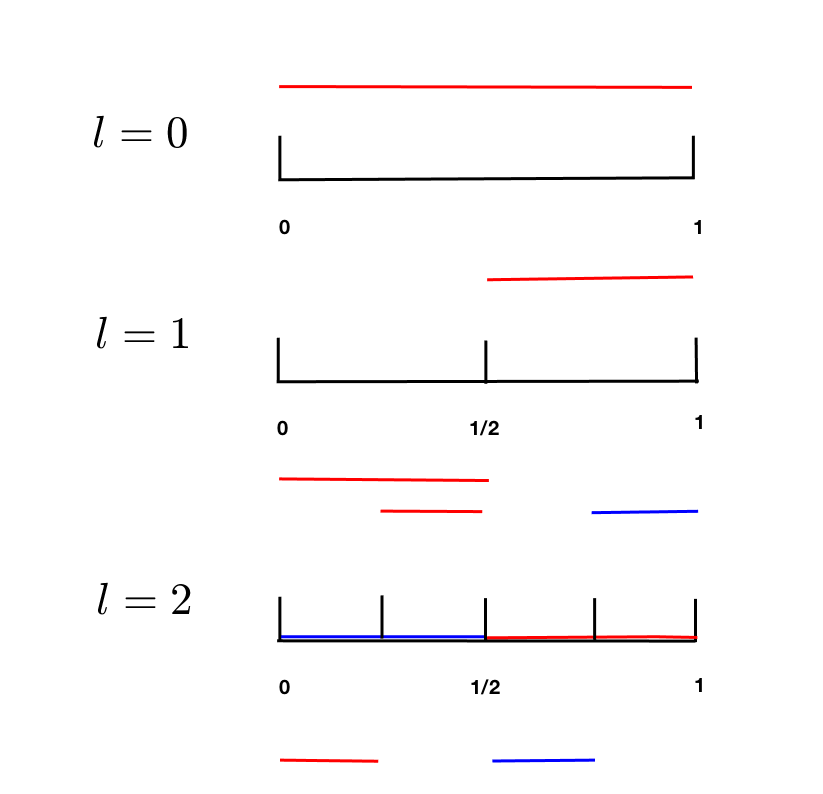}}
	\subfigure[ Dual mesh. Number of bases for $l=0, 1, 2$ are $2,1,2.$]{\label{fig:nonp_basis_k02} \includegraphics[width=0.48\textwidth]{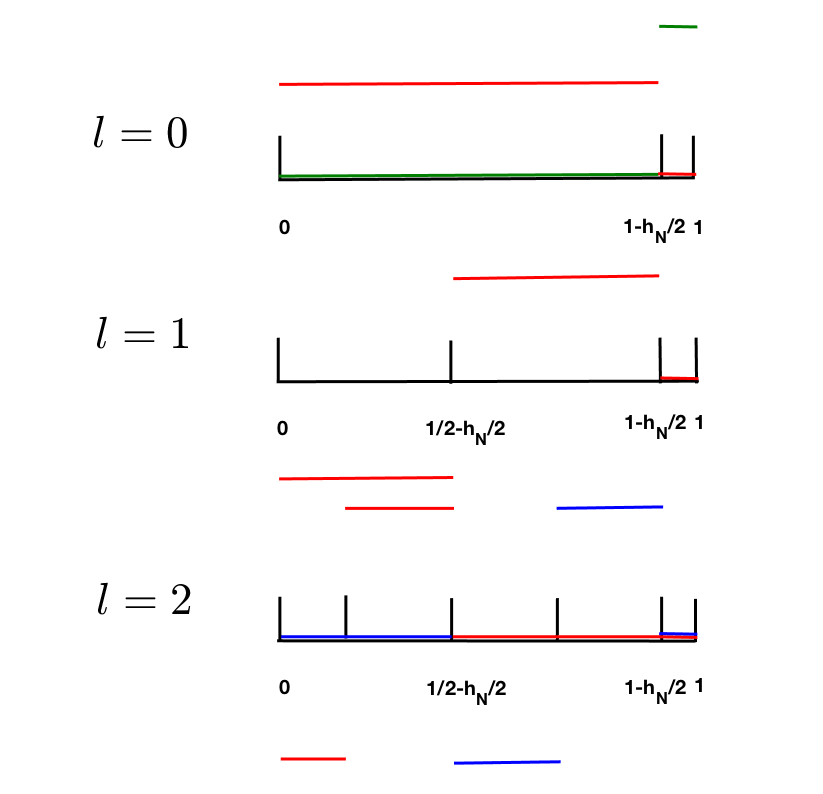}}
	\caption{Illustration of one-dimensional bases on different levels for $k = 0$: non-periodic problems. Different colors represent different bases.}
	\label{fig:nonp_basis_k0}
\end{figure}

\begin{figure}[h!]
	\centering
	\subfigure[Primal mesh. Number of bases for $l=0, 1, 2$ are $2,2,4.$]{\label{fig:nonp_basis_k11} \includegraphics[width=0.48\textwidth]{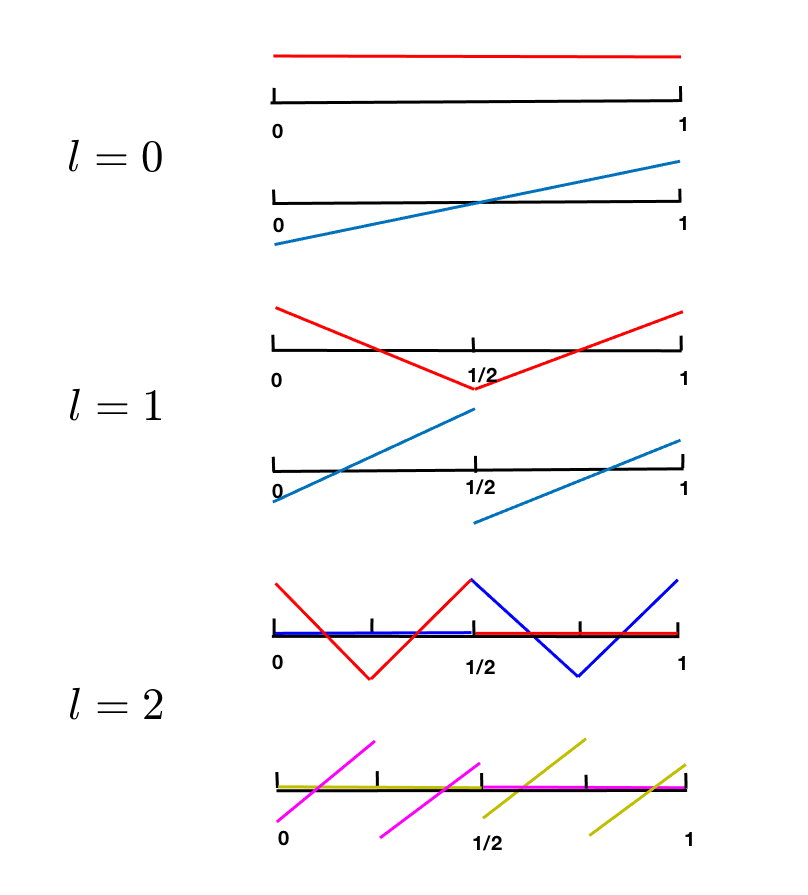}}
	\subfigure[ Dual mesh. Number of bases for $l=0, 1, 2$ are $4,2,4.$]{\label{fig:nonp_basis_k12} \includegraphics[width=0.48\textwidth]{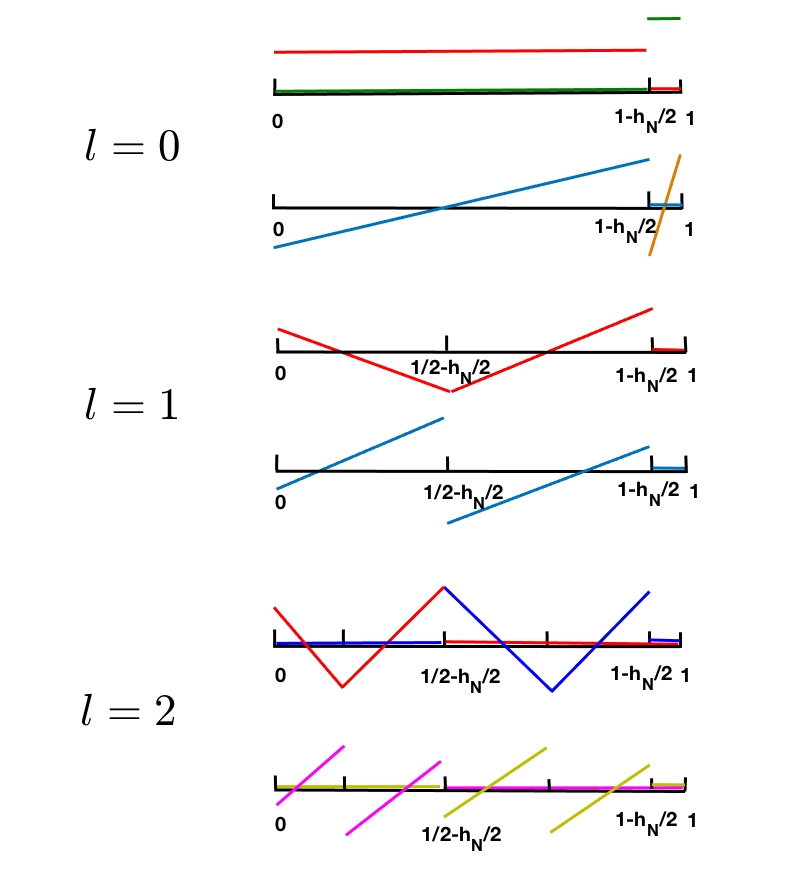}}
	\caption{Illustration of one-dimensional bases on different levels for $k = 1$: non-periodic problems.  Different colors represent different bases.}
	\label{fig:nonp_basis_k1}
\end{figure}

\section{Stability and convergence}
\label{sec:analysis}

In this section, we prove $L^2$ stability and error estimates for the sparse grid CDG scheme for the scalar equation.  We consider both periodic and non-periodic boundary conditions.
For periodic problems, \eqref{eq:model} reduces to
\begin{equation}
\label{eq:models}
\frac{\partial u}{\partial t} + \nabla \cdot (\mathbf{A} u ) =0,  \quad \bx\in\Omega,
\end{equation}
{\cb where ${\mathbf{A}} = (A_1(t, \bx), \cdots, A_d(t, \bx)),$ and $\|{\bf {A}} \|_{L^\infty(\Omega)} < \infty, \| \nabla \cdot {\bf {A}} \|_{L^\infty(\Omega)} < \infty.$ We assume $A_i \neq 0$ to avoid the discussion of different boundary conditions for degenerating coefficients. However, there is no difficulty to extend the proof below to degenerating case.} For non-periodic problems, the following inflow boundary conditions are prescribed,
$$u(t,\bx)|_{\partial \Omega_{x_i^{in}}} = g_i(t,\cdots,x_{i-1},x_{i+1},\cdots, x_d)$$ where 
$$\partial \Omega_{x_i^{in}} : =  \begin{cases} \{ \bx \in \Omega | x_i = 0\}, &  \text{if} \  A_i(t, \bx) >0, \\  \{ \bx \in \Omega | x_i = 1\}, &  \text{if} \  A_i <0. \end{cases}$$ 
Correspondingly, we denote the outflow edges by
$$\partial \Omega_{x_i^{out}} : =  \begin{cases} \{ \bx \in \Omega | x_i = 1\}, &  \text{if} \  A_i(t, \bx) >0,  	\\ \{ \bx \in \Omega | x_i = 0\}, &  \text{if} \  A_i <0. \end{cases}$$

The scheme for periodic case  reduces to:  to find $u_h\in\hat{\bV}_{N,P}^k$ and $v_h\in\hat{\bV}_{N,D}^k$, such that  
\begin{align}  
\label{eq:DGformulation_ps}
\int_{\Omega}(u_h)_t\,\ph_h\,d\bx =& \frac{1}{\ta}\int_{\Omega} (v_h-u_h) \,\ph_h\,d\bx  + \int_{\Omega}   v_h \mathbf{A} \cdot\nabla \ph_h\,d\bx - \sum_{\substack{e \in \Gamma_{N,P}}}\int_{e}  v_h \mathbf{A} \cdot  [\ph_h]   \,ds, \\
\label{eq:DGformulation_ds}
\int_{\Omega}(v_h)_t\,\psi_h\,d\bx =& \frac{1}{\ta}\int_{\Omega} (u_h-v_h) \,\psi_h\,d\bx +  \int_{\Omega}   u_h  \mathbf{A}  \cdot\nabla \psi_h\,d\bx - \sum_{\substack{e \in \Gamma_{N,D}}}\int_{e}   u_h \mathbf{A}  \cdot [\psi_h]  \,ds,
\end{align}
for any $\ph_h \in \hat{\bV}_{N,P}^k$ and $\psi_h \in \hat{\bV}_{N,D}^k.$
For non-periodic problems, we require $v_h, \psi_h \in \hat {\tilde {\bV}}_{N,D}^k,$ and enforce $u_h|_{\partial \Omega_{x_i^{in}}} =v_h|_{\partial \Omega_{x_i^{in}}}= g_i$ on the boundary interface.

We can prove that the schemes retain similar stability properties as the standard CDG schemes.

\begin{Thm}[$L^2$ Stability]
\label{thm:stability}
With periodic boundary condition,
the numerical solutions $u_h$ and $v_h$ of the sparse grid CDG scheme \eqref{eq:DGformulation_ps}-\eqref{eq:DGformulation_ds} for the equation \eqref{eq:models}  satisfy the following $L^2$ stability condition
\beq
\cb \|u_h\|^2_{L^2(\Omega_{N,P})} + \|v_h\|^2_{L^2(\Omega_{N,D})} \lesssim \|u_h(0,\bx)\|^2_{L^2(\Omega_{N,P})} + \|v_h(0,\bx)\|^2_{L^2(\Omega_{N,D})}.
\eeq

For non-periodic boundary condition, the corresponding numerical solutions satisfy
\beq
\cb \|u_h\|^2_{L^2(\Omega_{N,P})} + \|v_h\|^2_{L^2(\Omega_{N,D})} \lesssim \|u_h(0,\bx)\|^2_{L^2(\Omega_{N,P})} + \|v_h(0,\bx)\|^2_{L^2(\Omega_{N,D})} + \int_0^{T} \sum_{i=1}^d \int_{\partial \Omega_{x_i^{in}}} |A_i| g_i^2 ds \, dt
\eeq
  if $\ta \lesssim \frac{ h_N}{\|{\bf A}\|_1}.$
\end{Thm}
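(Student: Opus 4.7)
This is the energy argument in the style of the standard CDG analysis, adapted to the sparse grid setting. Choose $\ph_h=u_h$ in \eqref{eq:DGformulation_ps} and $\psi_h=v_h$ in \eqref{eq:DGformulation_ds}, and add the two identities. The time derivatives combine into $\tfrac12\tfrac{d}{dt}\bigl(\|u_h\|^2_{L^2(\Omega)}+\|v_h\|^2_{L^2(\Omega)}\bigr)$, while the two penalty contributions collapse to the non-positive dissipation $-\tfrac{1}{\ta}\|u_h-v_h\|^2_{L^2(\Omega)}$; this dissipation is the quantity on which the rest of the proof relies.

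The structural input that makes the edge fluxes cancel is that any function in $\hat{\bV}_{N,D}^k$ (or $\hat{\tilde{\bV}}_{N,D}^k$) is piecewise polynomial on the finest dual mesh, which is shifted by $h_N/2$ from the finest primal mesh, so every interior primal edge lies strictly inside a dual cell and $v_h$ is smooth across it, and symmetrically $u_h$ is smooth across every interior dual edge. Performing cell-wise integration by parts on the overlay of primal and dual meshes, the jump integrals on interior primal and dual edges produced by this integration match exactly the numerical-flux subtractions in the scheme and cancel, leaving
\begin{equation*}
\tfrac12\tfrac{d}{dt}\bigl(\|u_h\|^2_{L^2(\Omega)}+\|v_h\|^2_{L^2(\Omega)}\bigr)
= -\tfrac{1}{\ta}\|u_h-v_h\|^2_{L^2(\Omega)} -\int_\Omega (\nabla\cdot\mathbf{A})\,u_hv_h\,d\bx + \mathcal B,
\end{equation*}
where $\mathcal B$ is a surface contribution on $\partial\Omega$ that vanishes in the periodic case. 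Periodic stability then follows at once from $\bigl|\int(\nabla\cdot\mathbf{A})u_hv_h\bigr|\le\tfrac12\|\nabla\cdot\mathbf{A}\|_\infty(\|u_h\|^2+\|v_h\|^2)$ and Gronwall's lemma, with implicit constant of the form $e^{C\|\nabla\cdot\mathbf{A}\|_\infty T}$.

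For the non-periodic case, combining the boundary surface integral $\int_{\partial\Omega}u_hv_h\mathbf{A}\cdot\bn\,ds$ produced by integration by parts with the substitutions $u_h=v_h=g_i$ enforced on $\partial\Omega_{x_i^{in}}$ (and with the internal values retained on $\partial\Omega_{x_i^{out}}$) reduces $\mathcal B$ to the sum over $i$ of $-\int_{\partial\Omega_{x_i^{in}}}|A_i|\,u_hv_h\,ds-\int_{\partial\Omega_{x_i^{out}}}|A_i|\,u_hv_h\,ds+\int_{\partial\Omega_{x_i^{in}}}|A_i|\,g_i(u_h+v_h)\,ds$. Rewriting each product via $-u_hv_h=\tfrac12(u_h-v_h)^2-\tfrac12(u_h^2+v_h^2)$ and applying Young's inequality to the cross term $g_i(u_h+v_h)$, the inflow contributions yield exactly the data integral $\int_{\partial\Omega_{x_i^{in}}}|A_i|g_i^2\,ds$ appearing in the theorem plus a sign-favorable $-\tfrac12\int_{\partial\Omega}|A_i|(u_h^2+v_h^2)\,ds$ piece. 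The only problematic residue is $\tfrac12\int_{\partial\Omega}|A_i|(u_h-v_h)^2\,ds$; this is the technical heart of the proof, and the place where the CFL restriction enters. The discrete trace inequality on the overlay, whose boundary cells have size comparable to $h_N$, gives $\int_{\partial\Omega}(u_h-v_h)^2\,ds\lesssim h_N^{-1}\|u_h-v_h\|^2_{L^2(\Omega)}$, so this residue is bounded by $C\|\mathbf{A}\|_1\, h_N^{-1}\,\|u_h-v_h\|^2_{L^2(\Omega)}$, which is precisely the size that the dissipation $\tfrac{1}{\ta}\|u_h-v_h\|^2_{L^2(\Omega)}$ absorbs under the CFL hypothesis $\ta\lesssim h_N/\|\mathbf{A}\|_1$. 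Applying Gronwall's lemma then closes the estimate.
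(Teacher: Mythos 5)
Your proposal is correct and follows essentially the same route as the paper's proof: the same energy choice of test functions, the same cancellation of flux terms via the divergence theorem on the overlapping meshes leading to the $-\int_\Omega(\nabla\cdot\mathbf{A})u_hv_h\,d\bx$ term, the same Young-inequality treatment of the inflow/outflow boundary terms, and the same use of the trace (inverse) inequality so that the CFL hypothesis lets the dissipation $-\frac{1}{\ta}\|u_h-v_h\|^2_{L^2(\Omega)}$ absorb the boundary residue, followed by Gronwall. No gaps.
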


\begin{proof}
For periodic boundary condition,
let $\ph_h = u_h$ in \eqref{eq:DGformulation_ps} and $\psi_h = v_h$ in  \eqref{eq:DGformulation_ds}, summing the two equalities up, we have
\begin{align*}
&\ot \frac{d}{dt} \int_\Omega ( (u_h)^2 + (v_h)^2) d \bx\\
 =& \frac{1}{\ta}\int_{\Omega} v_h \,u_h - u_h \, u_h + u_h v_h - v_h v_h\,d\bx  
+  \into {v}_h {\bf{A}} \cdot\nabla u_h\,d\bx - \sum_{\substack{e \in \Gamma_{N,P}}}\int_{e}   v_h {\bf{A}} \cdot [u_h] \,ds	\\
&+  \into   {u}_h {\bf{A}} \cdot\nabla v_h\,d\bx - \sum_{\substack{e \in \Gamma_{N,D}}}\int_{e}   u_h {\bf{A}} \cdot [v_h] \,ds	\\
=& -\frac{1}{\ta} \into (u_h - v_h)^2 d\bx + \into {\bf{A}} \cdot \nabla (u_h v_h) d\bx - \sum_{\substack{e \in \Gamma_{N,P}}}\int_{e}   v_h {\bf{A}} \cdot [u_h] \,ds - \sum_{\substack{e \in \Gamma_{N,D}}}\int_{e}   u_h {\bf{A}} \cdot [v_h] \,ds.
\end{align*}

{\cb Apply divergence theorem, and by periodicity, we have
\[
\into {\bf{A}} \cdot \nabla (u_h v_h) d\bx - \sum_{\substack{e \in \Gamma_{N,P}}}\int_{e} {\bf{A}} v_h \cdot [u_h] \,ds - \sum_{\substack{e \in \Gamma_{N,D}}}\int_{e} {\bf{A}} u_h \cdot [v_h] \,ds = - \into \nabla \cdot {\bf{A}} u_h v_h d\bx.
\]

By the simple inequality $a b \leq \ot ( a^2 + b^2)$,
\[
\ot \frac{d}{dt} \int_{\Omega} \big ( (u_h)^2 + (v_h)^2 \big ) d\bx \leq -\frac{1}{\tau_{\max}} \int_{\Omega} (u_h -v_h)^2 d\bx + \ot \| \nabla \cdot {\bf {A}} \|_{L^\infty(\Omega)} \into ( (u_h)^2 + (v_h)^2) d \bx.
\]
and the proof for the periodic case is complete by using Gronwall's inequality. }

For non-periodic boundary condition, 
we follow the same lines and plug in the corresponding boundary condition,  
\begin{align*}
&\ot \frac{d}{dt} \int_\Omega ( (u_h)^2 + (v_h)^2) d \bx 
\\
 =& -\frac{1}{\ta} \into (u_h - v_h)^2 d\bx  {\cb - \into \nabla \cdot {\bf{A}} u_h v_h d\bx } +  \int_{\partial \Omega} {\bf A} \cdot \bn u_h v_h ds  - \sum_{i=1}^d \left ( \int_{\partial \Omega_{x_i^{in}}} {\bf A} \cdot \bn g_i(u_h + v_h) ds + 2 \int_{\partial \Omega_{x_i^{out}}} {\bf A} \cdot \bn u_h v_h  ds \right ) 	
 \\
  =& -\frac{1}{\ta} \into (u_h - v_h)^2 d\bx  {\cb - \into \nabla \cdot {\bf{A}} u_h v_h d\bx }	 +   \sum_{i=1}^d \left (\int_{\partial \Omega_{x_i^{in}}} |{\bf A} \cdot \bn| (-u_hv_h + g_i(u_h + v_h)) ds - \int_{\partial \Omega_{x_i^{out}}} |{\bf A} \cdot \bn| u_h v_h  ds \right)  
  \\
  \leq& -\frac{1}{\ta} \into (u_h - v_h)^2 d\bx {\cb + \ot \| \nabla \cdot {\bf {A}} \|_{L^\infty(\Omega)} \into ( (u_h)^2 + (v_h)^2) d \bx} 
  	\\
	& +  \sum_{i=1}^d \left ( \int_{\partial \Omega_{x_i^{in}}} | {\bf A} \cdot \bn | (g_i^2 +\ot(u_h - v_h)^2)  ds + \int_{\partial \Omega_{x_i^{out}}} | {\bf A} \cdot \bn | (\ot(u_h - v_h)^2-\ot u_h^2-\ot v_h^2) ds \right )
  \\
    \leq& -\frac{1}{\ta} \into (u_h - v_h)^2 d\bx  {\cb + \ot \| \nabla \cdot {\bf {A}} \|_{L^\infty(\Omega)} \into ( (u_h)^2 + (v_h)^2) d \bx} 
    	\\
	& +  \sum_{i=1}^d \left ( \int_{\partial \Omega_{x_i^{in}}} | {\bf A} \cdot \bn | g_i^2 ds  + \int_{\partial \Omega_{x_i^{in}} \cup \partial \Omega_{x_i^{out}}} | {\bf A} \cdot \bn | \ot(u_h - v_h)^2 ds \right )
    \\
    = & -\frac{1}{\ta} \into (u_h - v_h)^2 d\bx {\cb + \ot \| \nabla \cdot {\bf {A}} \|_{L^\infty(\Omega)} \into ( (u_h)^2 + (v_h)^2) d \bx} +  \sum_{i=1}^d \left ( \int_{\partial \Omega_{x_i^{in}}} |A_i| g_i^2 ds  + \int_{\partial \Omega_{x_i^{in}} \cup \partial \Omega_{x_i^{out}}} |A_i | \ot(u_h - v_h)^2 ds \right ).
\end{align*}
by noticing ${\bf A} \cdot \bn |_{\partial \Omega_{x_i^{in}}} < 0$ and ${\bf A} \cdot \bn|_{\partial \Omega_{x_i^{out}}} > 0.$


Let $T^{i}_{N,D} := \{ T \in \Omega_{N,D}| T \cap \partial \Omega_{x_i} \neq \emptyset \}$ denote the cells on dual mesh adjacent to the boundary in the $i$-th direction. 
By inverse inequality, we have $ \|u_h - v_h\|^2_{L^2(\partial \Omega_{x_i})}  \lesssim     h_N^{-1}  \|u_h - v_h\|^2_{L^2(T^{i}_{N,D} )} \le  h_N^{-1}  \|u_h - v_h\|^2_{L^2(\Omega)}.$  {\cb Therefore, 
\[
\ot \frac{d}{dt} \int_\Omega ( (u_h)^2 + (v_h)^2) d \bx \leq  \ot \| \nabla \cdot {\bf {A}} \|_{L^\infty(\Omega)} \into ( (u_h)^2 + (v_h)^2) d \bx + \sum_{i=1}^d \int_{\partial \Omega_{x_i^{in}}} |A_i| g_i^2 ds, \quad  \text{if } \ta \lesssim \frac{ h_N}{\|{\bf A}\|_1}
\]
and the proof for the non-periodic case is complete by using Gronwall's inequality.}
\end{proof}

\bigskip
Now we are ready to prove  $L^2$ error estimate of the sparse grid CDG scheme.

\begin{Thm}[$L^2$ error estimate]
\label{thm:lterr}
Let $u$ be the exact solution to \eqref{eq:models} and $u_h, v_h$ be the numerical solution to the semidiscrete scheme \eqref{eq:DGformulation_ps} and \eqref{eq:DGformulation_ds} with initial discretization $u_h(0, \bx) = \bP_P u_0, v_h(0, \bx) = \bP_D u_0$ for periodic boundary condition or $u_h(0, \bx) = \bP_P u_0, v_h(0, \bx) = \tilde \bP_D u_0$ for non-periodic boundary condition. If $\ta \lesssim h_N,$ then for $k\geq 1$, $u_0 \in \mathcal{H}^{p+1}(\Omega), 1\leq q \leq \min \{p,k\}, N \geq 1, d \geq 2$, we have for all $t \geq 0$
\beq
\label{eq:lterr}
\|u -u_h\|_{L^2(\Omega_{N,P})} + \|u -v_h\|_{L^2(\Omega_{N,D})} \lesssim N^d 2^{-Nq} \abs{u}_{\mathcal{H}^{q+1}(\Omega)}.
\eeq
\end{Thm}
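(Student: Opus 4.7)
The argument follows the standard DG/CDG energy-estimate template adapted to the sparse framework. Write $G=P$ or $D$, let $\bP_G$ denote the appropriate $L^2$ projection (using $\tilde\bP_D$ in the non-periodic case), and split
\[
u-u_h = (u-\bP_P u) + (\bP_P u - u_h) =: \eta_P + \xi_P, \qquad u-v_h = \eta_D + \xi_D,
\]
so that $\xi_P\in\hat\bV_{N,P}^k$ and $\xi_D\in\hat\bV_{N,D}^k$ (or $\hat{\tilde\bV}_{N,D}^k$). By the triangle inequality and Lemmas \ref{lem:ltproj}--\ref{lem:l22}, it suffices to bound $\|\xi_P\|_{L^2(\Omega_{N,P})}+\|\xi_D\|_{L^2(\Omega_{N,D})}$ by the right-hand side of \eqref{eq:lterr}.

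Next I would derive the error equation. Because $u$ is smooth, integration by parts shows that the exact solution satisfies \eqref{eq:DGformulation_ps}--\eqref{eq:DGformulation_ds} with $u_h,v_h$ replaced by $u$ (the $(u-u)/\ta$ term vanishes, and the jump terms disappear since $u$ is single-valued). Subtracting the scheme from the exact weak form and taking $\ph_h=\xi_P$, $\psi_h=\xi_D$, the time-derivative contribution of the projection errors drops out by $L^2$-orthogonality, leaving
\[
\tfrac{1}{2}\tfrac{d}{dt}\bigl(\|\xi_P\|^2+\|\xi_D\|^2\bigr) = \mathcal{S}(\xi_P,\xi_D) + \mathcal{R}(\eta_P,\eta_D;\xi_P,\xi_D),
\]
where $\mathcal{S}$ collects the ``stability'' terms in which every factor is a discrete error, and $\mathcal{R}$ collects the projection-error residuals. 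Mimicking the proof of Theorem \ref{thm:stability} verbatim, $\mathcal{S}\le -\frac{1}{\ta}\|\xi_P-\xi_D\|^2 + \tfrac12\|\nabla\cdot\mathbf A\|_{L^\infty}(\|\xi_P\|^2+\|\xi_D\|^2)$ in the periodic case; for the non-periodic case the same boundary manipulations used in Theorem \ref{thm:stability}, combined with the enforced identity $\xi_D|_{\partial\Omega_{x_i^{in}}}=-\eta_D|_{\partial\Omega_{x_i^{in}}}$ (since $u_h=v_h=g_i$ there), absorb the additional inflow terms into a harmless $L^2$-projection residual on $\partial\Omega$.

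The main obstacle is bounding $\mathcal{R}$, which contains mismatched-mesh contributions such as $\frac{1}{\ta}\int_\Omega\eta_D\,\xi_P\,d\bx$, $\int_\Omega\eta_D\,\mathbf A\cdot\nabla\xi_P\,d\bx$, and the edge terms $\sum_{e\in\Gamma_{N,P}}\int_e \eta_D\,\mathbf A\cdot[\xi_P]\,ds$ (and their $P\leftrightarrow D$ analogues, recalling that $\int_\Omega \eta_P\xi_P = 0$ kills the ``matched'' pieces). The volume terms are handled by Cauchy--Schwarz plus the inverse inequality $\|\nabla\xi_P\|_{L^2}\lesssim h_N^{-1}\|\xi_P\|_{L^2}$, giving a bound of the form $h_N^{-1}\|\eta_D\|_{L^2}(\|\xi_P\|+\|\xi_D\|)$; the hypothesis $\ta\lesssim h_N$ ensures the $\frac{1}{\ta}$ term has the same scaling. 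For the edge terms, the key observation is that primal and dual cell interfaces do not overlap, so $\eta_D$ has no jump across any $e\in\Gamma_{N,P}$ and is smooth in a neighbourhood of $e$; a standard trace inequality then yields $\sum_e\|\eta_D\|_{L^2(e)}^2\lesssim h_N^{-1}\|\eta_D\|_{L^2(\Omega)}^2 + h_N|\eta_D|_{H^1(\Omega_{N,D})}^2$, while the inverse trace inequality gives $\sum_e\|[\xi_P]\|_{L^2(e)}^2\lesssim h_N^{-1}\|\xi_P\|_{L^2}^2$. Invoking Lemmas \ref{lem:ltproj}--\ref{lem:l22} with $s=0,1$ bounds each piece of $\mathcal R$ by $(N^d 2^{-Nq}|u|_{\mathcal H^{q+1}})(\|\xi_P\|+\|\xi_D\|)$ uniformly in $t$ (using also that the PDE transfers mixed-derivative regularity of $u_0$ to $u(t,\cdot)$).

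Putting these bounds into the energy identity and applying Young's inequality to separate the $\xi$-factors, the dissipative term $-\frac{1}{\ta}\|\xi_P-\xi_D\|^2$ absorbs any boundary $\ot(u_h-v_h)^2$-type leftovers under the CFL condition $\ta\lesssim h_N$, and Gronwall's inequality yields $\|\xi_P\|+\|\xi_D\|\lesssim N^d 2^{-Nq}|u|_{\mathcal H^{q+1}}$ for all $t\ge0$. A final triangle inequality with Lemmas \ref{lem:ltproj}--\ref{lem:l22} gives \eqref{eq:lterr}. I expect the bookkeeping of the mesh-mismatched edge terms, and the verification that the extra boundary terms in the non-periodic stability identity combine harmlessly with the projection residual at $\partial\Omega$, to be the most delicate parts of the write-up.
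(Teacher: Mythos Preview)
Your proposal is correct and follows essentially the same approach as the paper: split the error into projection and discrete parts, invoke Galerkin orthogonality, test with the discrete errors $\xi_P,\xi_D$, control the quadratic-in-$\xi$ terms by the stability argument of Theorem~\ref{thm:stability}, bound the residual terms via Cauchy--Schwarz together with inverse and trace inequalities and the hypothesis $\ta\lesssim h_N$, and close with Gronwall. The only cosmetic difference is that you exploit $L^2$-orthogonality to drop the ``matched'' time-derivative and $\frac{1}{\ta}$ pieces, whereas the paper simply keeps all terms in its $B^1$ and bounds them directly; both lead to the same estimate.
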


\begin{proof}
For periodic problems, we first introduce the standard notation of bilinear form
\begin{align*}
B(u_h, v_h; \ph_h, \psi_h)  = & \int_{\Omega}(u_h)_t\,\ph_h\,d\bx - \frac{1}{\ta}\int_{\Omega} (v_h-u_h) \,\ph_h\,d\bx  - \int_{\Omega} v_h {\bf{A}} \cdot\nabla \ph_h\,d\bx + \sum_{\substack{e \in \Gamma_P}}\int_{e} v_h  {\bf{A}} \cdot [\ph_h] \,ds	\\
 + & \int_{\Omega}(v_h)_t\,\psi_h\,d\bx - \frac{1}{\ta}\int_{\Omega} (u_h - v_h) \,\psi_h\,d\bx - \int_{\Omega} u_h {\bf{A}} \cdot\nabla \psi_h\,d\bx  + \sum_{\substack{e \in \Gamma_D}}\int_{e}  u_h   {\bf{A}}  \cdot [\psi_h] \,ds.
\end{align*}

By Galerkin orthogonality, we have the error equation
\beq
\label{eq:erreq1}
B(u-u_h, u-v_h; \ph_h, \psi_h) = 0, \quad \forall \ph_h \in \hat{\bV}_{N,P}^k, \psi_h \in \hat{\bV}_{N,D}^k.
\eeq

We take
$$
\begin{array}{cc}
\ph_h = \bP_P u - u_h ,	& \psi_h = \bP_D u - u_h ,	\\
\ph^e = \bP_P u - u ,	& \psi^e = \bP_D u - u ,	\\
\end{array}
$$
then the error equation \eqref{eq:erreq1} becomes
\beq
\label{eqn:erreq2}
B(\ph_h, \psi_h; \ph_h, \psi_h) = B(\ph^e, \psi^e; \ph_h, \psi_h).
\eeq

From  Theorem \ref{thm:stability}, we get
\beq
\label{eq:erreqlhs}
\ot \frac{d}{dt} \intom \big ( \ph_h^2 + \psi_h^2 \big ) d\bx \leq B(\ph^e, \psi^e; \ph_h, \psi_h) {\cb + \ot \| \nabla \cdot {\bf {A}} \|_{L^\infty(\Omega)} \into ( \ph_h^2 + \psi_h^2) d \bx}.
\eeq


We write the bilinear form on the right-hand side as a sum of three terms
\beq
\label{eq:erreqrhs}
B(\ph^e, \psi^e; \ph_h, \psi_h) = B^1 + B^2 + B^3,
\eeq
where
\begin{align*}
& B^1 = \int_{\Omega}(\ph^e)_t\,\ph_h\,d\bx - \frac{1}{\ta}\int_{\Omega} (\psi^e-\ph^e) \,\ph_h\,d\bx + \int_{\Omega}(\psi^e)_t\,\psi_h\,d\bx - \frac{1}{\ta}\int_{\Omega} (\ph^e - \psi^e) \,\psi_h\,d\bx ,								\\
& B^2 = - \int_{\Omega} \psi^e {\bf{A}} \cdot\nabla \ph_h\,d\bx - \int_{\Omega} \ph^e {\bf{A}} \cdot\nabla \psi_h\,d\bx ,	\\
& B^3 = \sum_{\substack{e \in \Gamma_{N,P}}}\int_{e}  \psi^e {\bf{A}} \cdot [\ph_h] \,ds + \sum_{\substack{e \in \Gamma_{N,D}}}\int_{e}   \ph^e {\bf{A}} \cdot [\psi_h]  \,ds.
\end{align*}

By Cauchy-Schwartz inequality, Lemma \ref{lem:ltproj} and $\ta \lesssim h_N$, we have
\beq
\label{eq:b1}
B^1 \lesssim \intom (\ph_h^2 + \psi_h^2 )d\bx  + N^{2d} 2^{-2Nq}\abs{u}^2_{\mathcal{H}^{q+1}(\Omega)}. 	
\eeq

To estimate $B^2, B^3$, we use the following inverse inequalities  $\forall w_h \in \hat{\bV}_{N,G}^k,$ for $ G = P,D$,
$$
| w_h |_{\mathcal{H}^1(\Omega_{N,G})} \lesssim h_N^{-1} \|w_h \|_{L^2(\Omega_{N,G})}, \quad \| w_h\|_{\Gamma_{N,G}} \lesssim h_N^{-\ot} \|w_h \|_{L^2(\Omega_{N,G})}
$$
and trace inequality,
$$
\| \phi \|_{L^2(\partial T)}^2 \lesssim {h_N}^{-1} \| \phi \|_{L^2(T)}^2 + h_N | \phi |_{H^1(T)}, \quad \forall \phi \in H^1(T), T \in \Omega_{N,G}.
$$

Then we have
\beq
\label{eq:b2}
B^2 \lesssim \intom (\ph_h^2 + \psi_h^2 )d\bx + N^{2d} 2^{-2Nq}\abs{u}^2_{\mathcal{H}^{q+1}(\Omega)}
\eeq
and
\beq
\label{eq:b3}
B^3 \lesssim \intom (\ph_h^2 + \psi_h^2 )d\bx + N^{2d} 2^{-2Nq}\abs{u}^2_{\mathcal{H}^{q+1}(\Omega)}.
\eeq

Combining \eqref{eq:b1}, \eqref{eq:b2}, \eqref{eq:b3} with \eqref{eq:erreqlhs}, we obtain
$$
\frac{d}{dt} \intom \big ( \ph_h^2 + \psi_h^2 \big ) d\bx \lesssim \intom (\ph_h^2 + \psi_h^2 )d\bx + N^{2d} 2^{-2Nq} \abs{u}^2_{\mathcal{H}^{q+1}(\Omega)}.
$$

Together with the estimates for initial discretization   and by Gronwall's inequality, the proof is complete. For non-periodic problems, the argument is very similar as long as  the stability result holds. The proof is omitted for brevity.
\end{proof}

This theorem proves $L^2$ error of the scheme is $O(N^d 2^{-Nk})$ or $O(\abs{\log h_N}^d h_N^k)$ when the exact solution has enough smoothness in the mixed derivative norms. 

\section{Numerical results}
\label{sec:numerical}

In this section, we present several numerical tests to validate the performance of the proposed sparse grid CDG schemes. Unless otherwise stated, we use the third-order TVD-RK temporal discretization \cite{Shu_1988_JCP_NonOscill}   and choose the time step   $\Delta t = \frac{c}{\displaystyle\sum_{i=1}^d \frac{c_i}{h_N}},$ 
with $c=0.1$ for $k=1,\,2$, where $c_i$ is the maximum wave propagation speed in $x_i$-direction. To guarantee that the spatial error dominates for $k=3$, we take $\Delta t = O(h_N^{4/3}).$   $\ta$ is taken as $\frac{1}{2k+1}h_N$ which is always smaller than the maximum time step allowed based on the CFL number in Table \ref{table:linear_cfl}. For periodic problems, we only provide   $L^2$ errors on the primal mesh, because the results on the dual mesh are similar. For non-periodic problems, the $L^2$ errors are the $L^2$ average of the errors on the primal and dual meshes.

\subsection{Scalar case}
In this subsection, we consider the scalar case, i.e. $m=1.$

\begin{exa}[Linear advection with constant coefficients]
	\label{ex:linear}
	We consider  
	\begin{equation}
	\label{eq:linear_adv}
	\left\{\begin{array}{l} \displaystyle u_t + \sum_{i=1}^d u_{x_i} = 0,\quad \bx\in[0,1]^d,\\[2mm]
	\displaystyle u(0,\bx) = \sin\left(2\pi\sum_{i=1}^d x_i\right),
	\end{array}\right.
	\end{equation}
	with periodic or Dirichlet boundary conditions on the inflow edges corresponding to the given exact solution.
\end{exa}

The exact solution   is a smooth function, 
$$u(t,\bx) =  \sin\left(2\pi\left(\sum_{i=1}^d x_i - d\,t \right)\right).$$
In the simulation, we compute the numerical solutions up to two periods in time, meaning that we let final time $T=1$ for $d=2$, $T=2/3$ for $d=3$, and $T=0.5$ for $d=4$. 

We first test the scheme with periodic boundary condition.
In Table \ref{table:linear_d2}, we report the $L^2$ errors and  orders of accuracy for $k=1, 2, 3$ and up to dimension four. As for accuracy, we observe about half order reduction from the optimal $(k+1)$-th order for high-dimensional computations ($d=4$). The order is slightly better for lower dimensions. The convergence order is similar to the performance of the sparse grid DG scheme in \cite{guo2016sparse}. In Figure \ref{fig:linear}, we plot the time evolution of the error of $L^2$ norm of numerical solutions $u_h$ and $v_h$, which is given by
$$\int_{\Omega} \big ( (u_h(t, \bx))^2 + (v_h(t, \bx))^2 \big ) d\bx - \int_{\Omega} \big ( (u_h(0, \bx))^2 + (v_h(0, \bx))^2 \big ) d\bx$$
for two-dimensional case for $t=0$ to $t=100$. From Theorem \ref{thm:stability}, such errors are proportional to the difference between $u_h$ and $v_h.$ We can clearly see that the higher order accurate scheme performs way better in conservation of $L^2$ norm due to its higher order accuracy.

\begin{table}[htp]
	
	\caption{$L^2$ errors and orders of accuracy for Example \ref{ex:linear} at $T=1$ when $d=2$, $T=2/3$ when $d=3$, and $T=0.5$ when $d=4$. 
		$N$ denotes mesh level, $h_N$ is the size of the smallest mesh in each direction, $k$ is the polynomial order, $d$ is the dimension.  $L^2$ order is calculated with respect to  $h_N$. 
	}
	\centering
	\begin{tabular}{|c|c|c c|c c|c c|}
		\hline
		& & $L^2$ error & order& $L^2$ error & order & $L^2$  error & order\\
		\hline
		$N$& $h_N$&   \multicolumn{2}{c|}{$ k=1$}    &     \multicolumn{2}{c|}{$ k=2$}  &     \multicolumn{2}{c|}{$ k=3$}  \\
		\hline
		& &       \multicolumn{6}{c|}{$d=2$}     \\
		\hline
		3 &$1/8$  &  3.14E-01	&   --       &     1.20E-02	&	--     &	5.84E-04	&	--   \\
		4 &$1/16$ &	 6.99E-02	&	2.17     &     2.23E-03	&	2.43   &	8.50E-05	&	2.78   \\
		5 &$1/32$ &  1.34E-02	&	2.38     &	   4.87E-04	&	2.20   &	3.84E-06	&	4.47   \\
		6 &$1/64$ &	 3.43E-03	&	1.97     &	   5.97E-05	&	3.03   &	3.89E-07	&	3.30   \\
		7 &$1/128$&	 9.21E-04   &	1.90     &	   9.33E-06	&	2.68   &	1.80E-08	&	4.43  \\

		\hline
		& &       \multicolumn{6}{c|}{$d=3$}     \\
		\hline
		3 &$1/8$  &	 6.77E-01	&   --	     &     5.27E-02	&	--	   &	2.13E-03	&	--	     \\
		4 &$1/16$ &  3.56E-01	&	0.93     &     1.10E-02	&	2.26   &	2.62E-04	&	3.02   \\
		5 &$1/32$ &  1.05E-01	&	1.76     &     1.82E-03	&	2.60   &	2.85E-05	&	3.20   \\
		6 &$1/64$ &  2.54E-02	&	2.05     &     5.22E-04	&	1.80   &	2.01E-06	&	3.83  \\
		7 &$1/128$&	 7.45E-03   &	1.77     &     6.89E-05	&	2.92   &	2.01E-07	&	3.32  \\
		\hline
		
		& &       \multicolumn{6}{c|}{$d=4$}     \\
		\hline
		3 &$1/8$  &	 7.13E-01	&   --	     &     1.26E-01	&	--	   &	4.41E-03	&	--	     \\
		4 &$1/16$ &  6.48E-01	&	0.14     &     3.39E-02	&	1.89   &	7.56E-04	&	2.54   \\
		5 &$1/32$ &  3.80E-01	&	0.77     &     6.91E-03	&	2.29   &	9.82E-05	&	2.94   \\
		6 &$1/64$ &  1.37E-01	&	1.47     &     1.39E-03	&	2.31   &	9.44E-06	&	3.38  \\
		7 &$1/128$&	 3.81E-02   &	1.85     &     3.56E-04	&	1.97   &	8.16E-07	&	3.53  \\
		\hline
		
	\end{tabular}
	\label{table:linear_d2}
\end{table}

	\begin{figure}[htp]
	\begin{center}
		\subfigure[k=1]{\includegraphics[width=.32\textwidth]{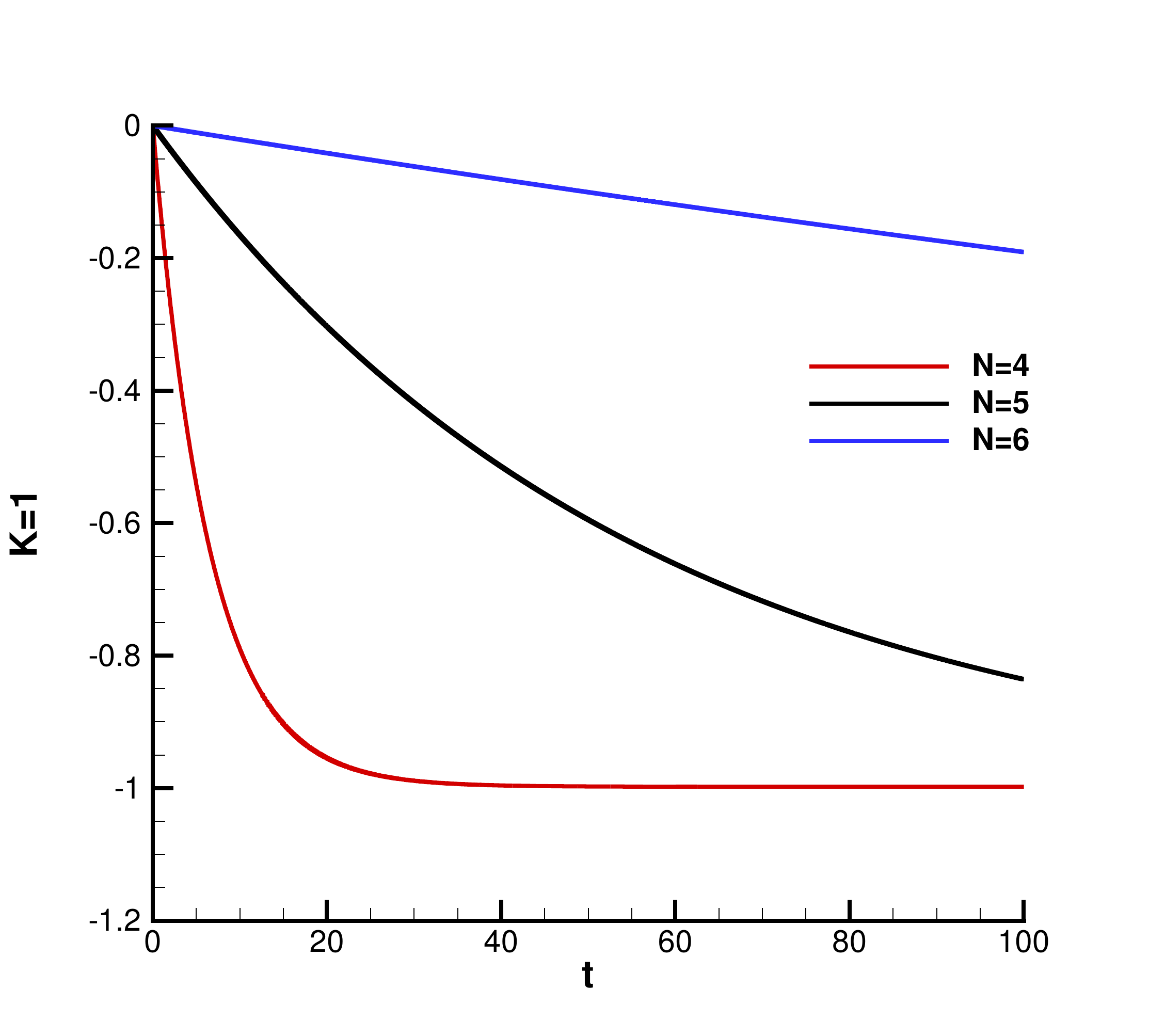}}
		\subfigure[k=2]{\includegraphics[width=.32\textwidth]{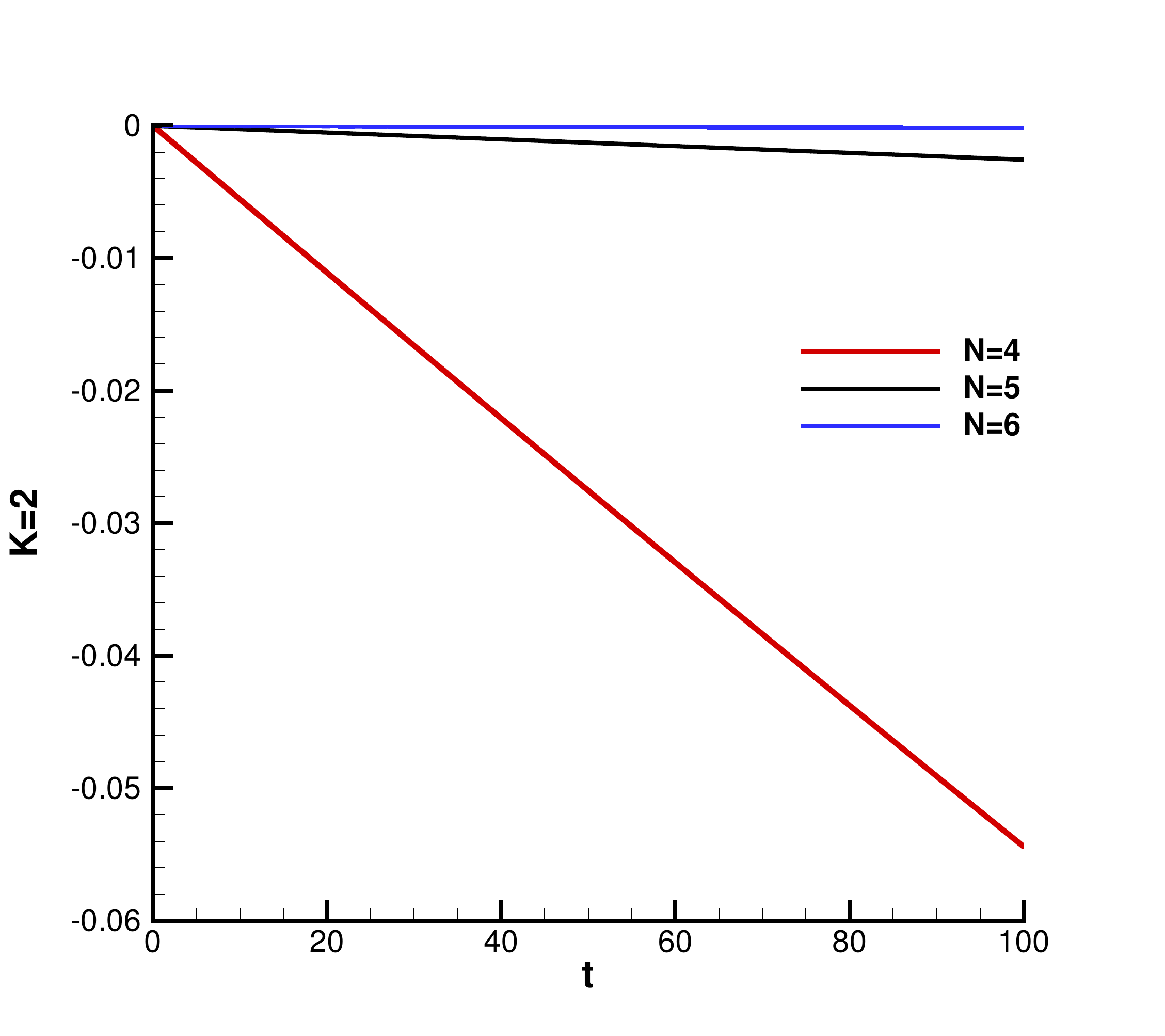}}
		\subfigure[k=3]{\includegraphics[width=.32\textwidth]{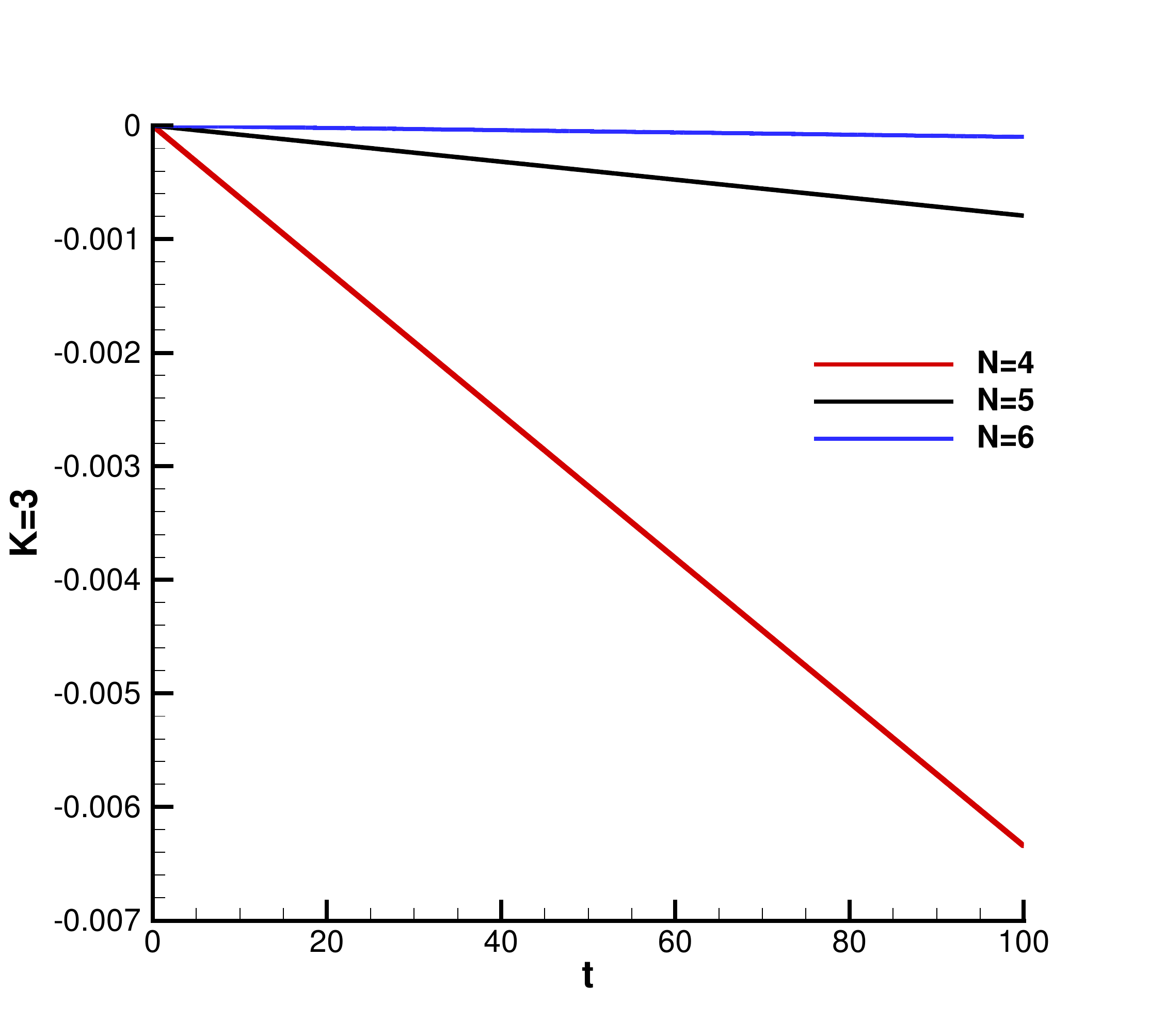}}
	\end{center}
	\caption{Example \ref{ex:linear}. The time evolution of the error of $L^2$ norm of numerical solutions $u_h$ and $v_h$ of the sparse grid CDG method with $d=2.$ (a) k=1, (b) k=2, (c) k=3. $N=4,5,6$. }
	\label{fig:linear}
    \end{figure}

Then, we test the scheme with Dirichlet boundary condition prescribed at the inflow edge according to the exact solution. The results are listed in Table \ref{table:linear_nonp}. The accuracy order is similar to the periodic case.

\begin{table}[htp]
	
	\caption{$L^2$ errors and orders of accuracy for Example \ref{ex:linear} with Dirichlet boundary condition on the inflow edges at $T=1$ when $d=2$ and $T=2/3$ when $d=3$. 
		$N$ denotes mesh level, $h_N$ is the size of the smallest mesh on the primal mesh in each direction,  $k$ is the polynomial order, $d$ is the dimension. $L^2$ order is calculated with respect to  $h_N$. 
	}
	\centering
	\begin{tabular}{|c|c|c c|c c|c c|}
		\hline
		& & $L^2$ error & order& $L^2$ error & order & $L^2$  error & order\\
		\hline
		$N$& $h_N$&   \multicolumn{2}{c|}{$ k=1$}    &     \multicolumn{2}{c|}{$ k=2$}  &     \multicolumn{2}{c|}{$ k=3$}  \\
		\hline
		& &       \multicolumn{6}{c|}{$d=2$}     \\
		\hline
		3 &$1/8$  &  2.66E-01	&   --       &      1.66E-02	&	--     &	8.21E-04	&	--   \\
		4 &$1/16$ &	 7.47E-02	&	1.83     &	 	3.33E-03	&	2.32   &	8.80E-05	&	3.22   \\
		5 &$1/32$ &  1.94E-02	&	1.95     &	 	5.97E-04	&	2.48   &	4.79E-06	&	4.20   \\
		6 &$1/64$ &	 5.44E-03	&	1.83     &	 	8.60E-05	&	2.80   &	4.50E-07	&	3.41   \\
		7 &$1/128$&	 1.49E-03    &	1.87     &	 	1.35E-05	&	2.67   &	2.20E-08	&	4.35  \\

		\hline
		& &       \multicolumn{6}{c|}{$d=3$}     \\
		\hline
		3 &$1/8$  &	 6.15E-01	&   --	     &     5.34E-02	&	--	   &	2.67E-03	&	--	     \\
		4 &$1/16$ &  2.86E-01	&	1.10     &     1.40E-02	&	1.93   &	2.87E-04	&	3.22   \\
		5 &$1/32$ &  1.14E-01	&	1.33     &     2.57E-03	&	2.45   &	3.21E-05	&	3.16   \\
		6 &$1/64$ &  3.23E-02	&	1.82     &     5.82E-04	&	2.14   &	2.60E-06	&	3.63  \\
		7 &$1/128$&	 1.03E-02   &	1.65     &     9.81E-05	&	2.57   &	2.86E-07	&	3.18  \\
		\hline
		
	\end{tabular}
	\label{table:linear_nonp}
\end{table}

{\cb
Finally, we use this example to compare the performance of the DG, CDG, sparse grid DG and sparse grid CDG methods.  We use the following non-separable initial condition
\begin{equation}
\label{initial}
u(0, \mathbf{x})=\exp\left(\sin\left(2\pi\sum_{i=1}^d x_i\right) \right),\quad \mathbf{x}\in[0,1]^d,
\end{equation}
where $d=2.$  When $k=1, 2, 3$, Runge-Kutta methods of order $\nu = 2,3,4$, respectively, are used for time discretization. We take the time step according to the 
CFL numbers listed in Table \ref{table:linear_cfl}. We plot the comparison of the methods measuring $L^2$ errors vs. CPU times in Figure \ref{fig:cpu_2d}.  The computations in this example are implemented by an OpenMP code  using computational resources from the Institute for Cyber-Enabled Research in  Michigan State University. We can see that the sparse grid CDG method outperforms the CDG method, and the sparse grid DG method outperforms the DG method particularly when the mesh level $N$ is more refined.
When the mesh level increases from $N$ to $N+1$, the CPU cost for sparse grid method grows with the rate of about 4 to 5, while the factor is about 8 to 10 for  full grid calculations, respectively, for this 2D case. This shows the advantage of the sparse grid approach. When comparing the sparse grid CDG method with the sparse grid DG method, it seems that for this example, the sparse grid DG method is more efficient. It will be interesting to compare the results for fully nonlinear problems in higher dimensions, for which the CDG method is more advantageous, and this is currently under investigation.
}
 
 \begin{figure}[htp]
	\begin{center}
		\subfigure[k=1]{\includegraphics[width=.32\textwidth]{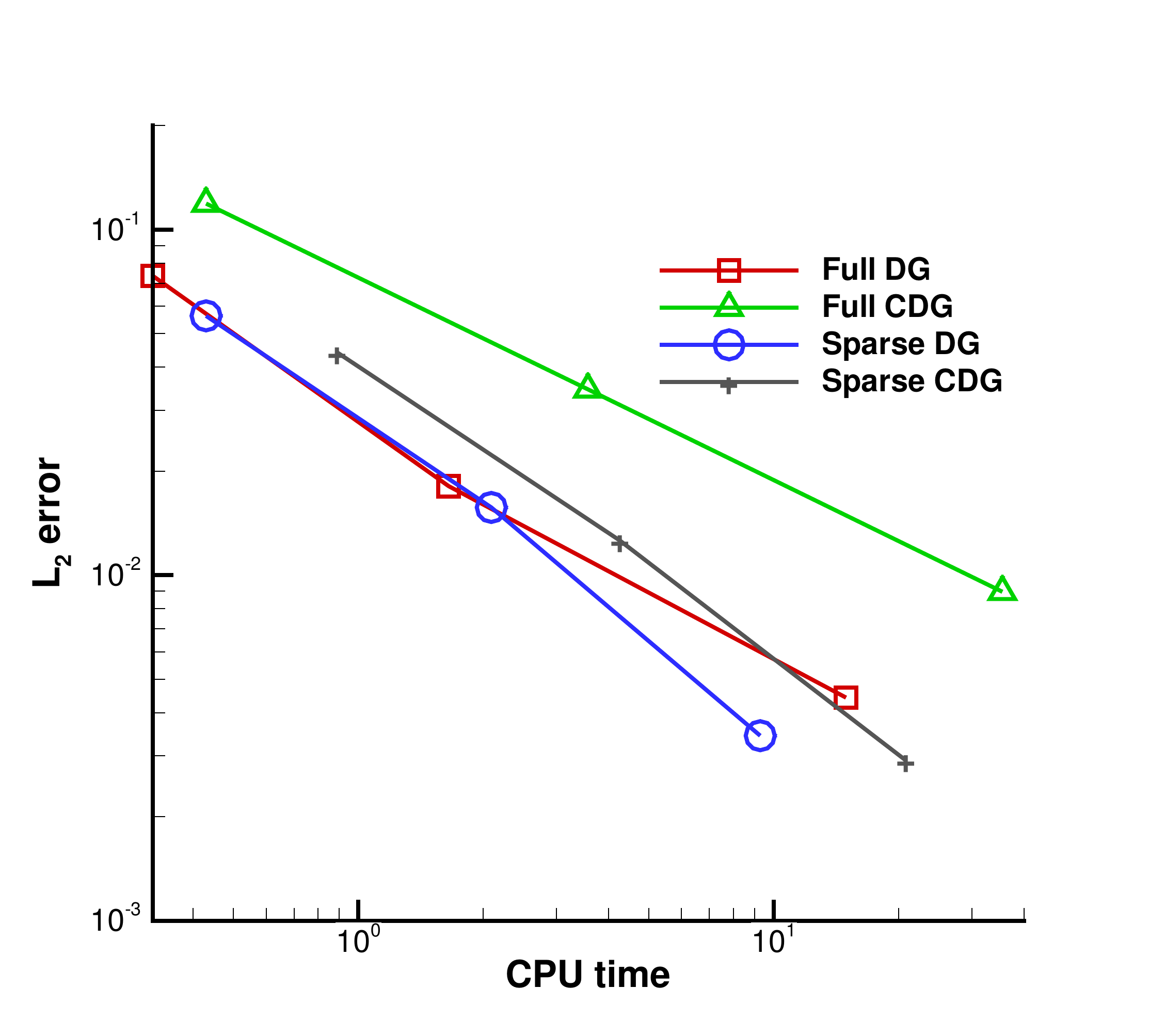}}
		\subfigure[k=2]{\includegraphics[width=.32\textwidth]{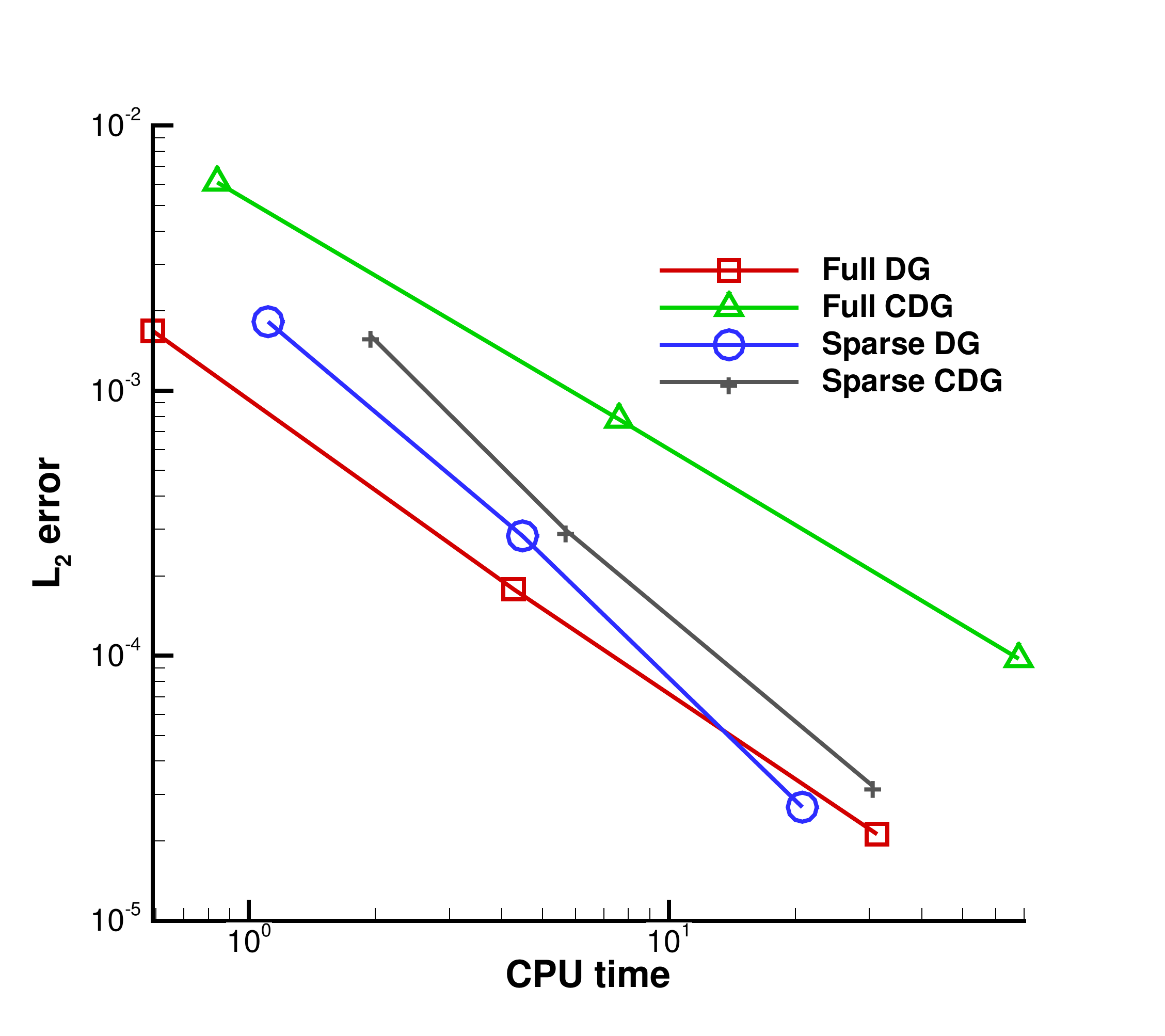}}
		\subfigure[k=3]{\includegraphics[width=.32\textwidth]{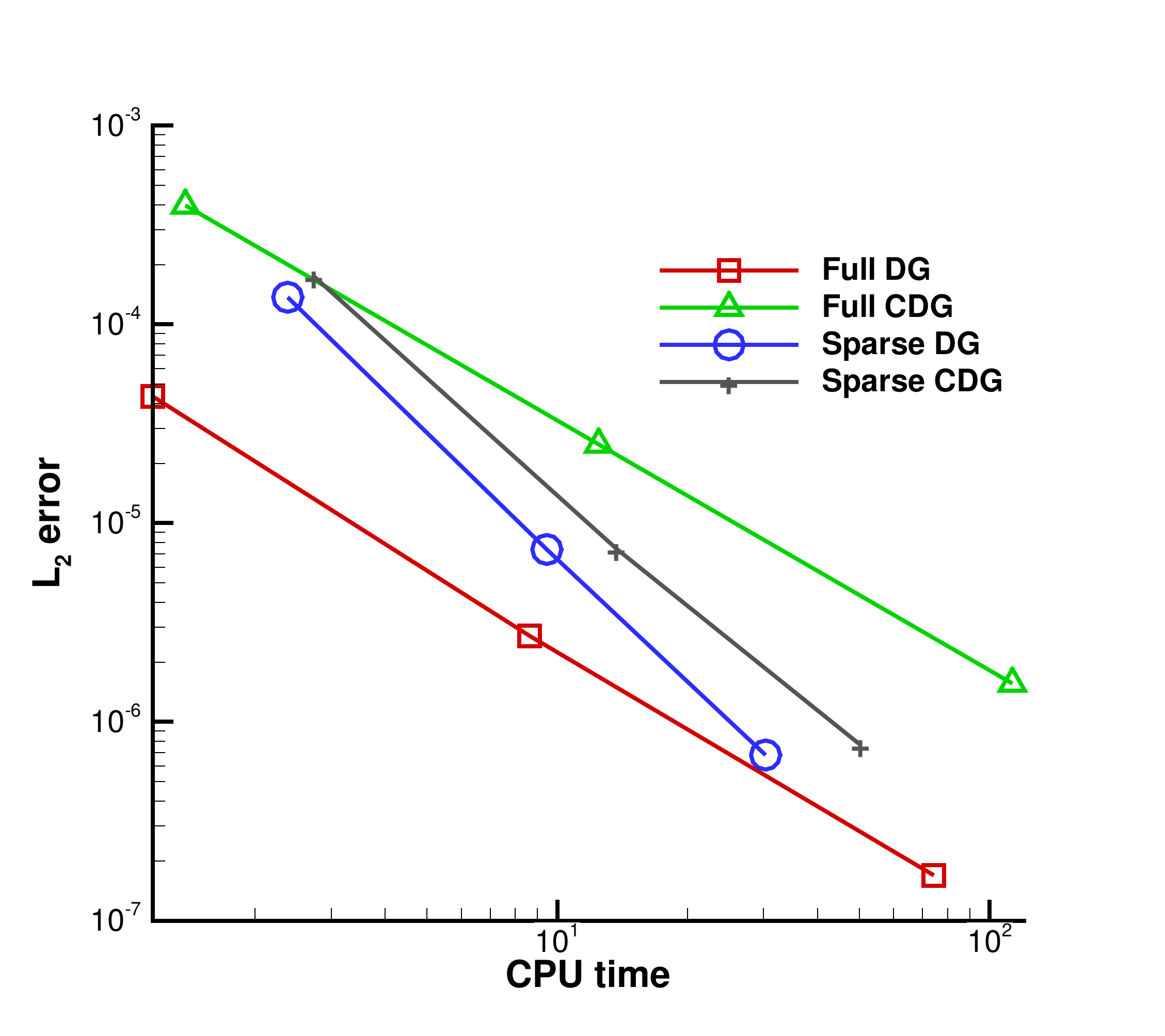}}
	\end{center}
	\caption{ $L^2$ errors and associated CPU times of DG, CDG, sparse grid DG and sparse grid  CDG methods for Example 4.1 with initial condition \eqref{initial}  at $T=1$ for d=2. (a) k=1, (b) k=2, (c) k=3. }
	\label{fig:cpu_2d}
\end{figure}

\begin{exa}[Solid body rotation]
	\label{ex:rotation}
	We consider solid-body-rotation problems, which are in the form of \eqref{eq:model0} with
	$$A_1(t, \bx) = -x_2+\frac12,\, A_2(t, \bx) = x_1-\frac12,\quad d=2,$$
	$$A_1(t, \bx) = -\frac{\sqrt{2}}{2}\left(x_2-\frac12\right), \,
	A_2(t, \bx) = \frac{\sqrt{2}}{2}\left(x_1-\frac12\right)+ \frac{\sqrt{2}}{2}\left(x_3-\frac12\right), \,
	A_3(t, \bx) = -\frac{\sqrt{2}}{2}\left(x_2-\frac12\right),\quad d=3,$$
	subject to periodic boundary conditions.
\end{exa}

Such benchmark tests are commonly used in the literature to assess performance of transport schemes. Here,  the initial profile traverses along circular trajectories centered at $(1/2,1/2)$ for $d=2$ and about the axis $\{x_1=x_3\}\cap \{x_2=1/2\}$ for $d=3$ without deformation, and it goes back to the initial state after $2\pi$ in time.
The initial conditions are set to be the following smooth cosine bells (with $C^5$ smoothness),
\begin{equation}\label{eq:cosine} u(0,\bx)=\left\{\begin{array}{ll}b^{d-1}\cos^6\left(\frac{\pi r}{2b}\right),& \text{if}\quad r\leq b,\\
0,&\text{otherwise},
\end{array}\right.\end{equation}
where $b=0.23$ when $d=2$ and $b=0.45$ when $d=3$, and $r=|\bx-\bx_c|$ denotes the distance between $\bx$ and the center of the cosine bell with $\bx_c=(0.75,0.5)$ for $d=2$ and  $\bx_c=(0.5,0.55,0.5)$ for $d=3$.  

In Table \ref{table:rot}, we summarize the convergence study of the numerical solutions computed by the sparse CDG method, including the $L^2$ errors and   orders of accuracy. For this variable coefficients equation, we observe at least $k$-th order convergence for all cases. The order is slightly lower than the corresponding ones in Example \ref{ex:linear}.

%

\begin{table}[htp]
	
	\caption{$L^2$ errors and orders of accuracy for Example \ref{ex:rotation} at $T=2\pi$. $N$ denotes mesh level, $h_N$ is the size of the smallest mesh in each direction, $k$ is the polynomial order, $d$ is the dimension.  $L^2$ order is calculated with respect to  $h_N$.
	}
	\centering
	\begin{tabular}{|c|c|c c|c c|c c|}
		\hline
		& & $L^2$ error & order& $L^2$ error & order  & $L^2$  error & order\\
		\hline
		$N$& $h_N$&   \multicolumn{2}{c|}{$ k=1$}    &     \multicolumn{2}{c|}{$ k=2$}  &    \multicolumn{2}{c|}{$ k=3$}  \\
		\hline
		& &       \multicolumn{6}{c|}{$d=2$}     \\
		\hline
		5 &$1/32$ &    1.53E-02  &  --       &   5.81E-03    &  --     &    1.34E-03   & --  \\
		6 &$1/64$ &    1.02E-02  &	0.58     &	 1.50E-03    &  1.95   &    9.64E-05   & 3.80  \\
		7 &$1/128$&    4.66E-03  &	1.13     &	 1.46E-04	 &	3.36   &    1.16E-05   & 3.05  \\
		8 &$1/256$&    1.42E-03	 &  1.71     &   2.34E-05	 &	2.64   &    1.10E-06   & 3.40  \\

		\hline
		& &       \multicolumn{6}{c|}{$d=3$}     \\
		\hline
		5 &$1/32$ 	&  4.83E-03   & --       	&  6.25E-04  	&  --  	&    7.35E-05 	&	 --  \\
		6 &$1/64$ 	&  1.87E-03   & 1.37   	&  1.20E-04  	& 2.38 	&    9.18E-06 	& 	3.00 \\
		7 &$1/128$	&  7.46E-04   & 1.33     	&  3.39E-05  	& 1.82 	&  1.36E-06 	& 	2.75  \\
		8 &$1/256$	&  2.55E-04   & 1.55     	&  8.11E-06  	& 2.06 	& 	1.94E-07	& 	2.81  \\
		\hline
		
	\end{tabular}
	\label{table:rot}
\end{table}

\begin{exa}[Deformational flow]
	\label{ex:deformational}
	We consider the two-dimensional deformational flow with velocity field
	$$A_1(t, \bx) =  \sin^2(\pi x_1)\sin(2\pi x_2)g(t),\, A_2(t, \bx) = -\sin^2(\pi x_2)\sin(2\pi x_1)g(t),$$
	where $g(t)=\cos(\pi t/T)$ with $T=1.5,$ with periodic boundary condition. 
\end{exa}

We still adopt the cosine bell \eqref{eq:cosine} as the initial condition for this test, but with $\bx_c=(0.65,0.5)$ and $b=0.35$.
Note that the deformational test is more challenging than the solid body rotation due to the space and time dependent flow field. In particular, along the direction of the flow, the cosine bell deforms into a crescent shape at $t=T/2$ , then goes back to its initial state at $t=T$ as the flow reverses. In the simulations, we compute the solution up to $t=T$. 
The convergence study is summarized in Table \ref{table:deform_d2}. {Similar orders are observed compared with Example \ref{ex:rotation}. In Figure \ref{fig:defor}, we plot the contour plots of the numerical solutions on the primal mesh at $t=T/2$ when the shape of the bell is greatly deformed, and $t=T$ when the solution is recovered into its initial state. It is observed that the sparse CDG scheme with higher degree $k$ can better resolve the highly deformed solution structure. 

	\begin{table}[htp]
		\caption{$L^2$ errors and orders of accuracy for Example \ref{ex:deformational} at $T=1.5$. $N$ denotes mesh level, $h_N$ is the size of the smallest mesh in each direction, $k$ is the polynomial order, $d$ is the dimension.  $L^2$ order is calculated with respect to  $h_N$. $d=2$. 
		}
		\centering
		\begin{tabular}{|c|c | c c| c c| c c|}
			\hline
			$N$ & $h_N$&$L^2$ error & order & $L^2$ error & order & $L^2$ error & order\\
			\hline
			
			& &        \multicolumn{2}{c|}{$ k=1$} &  \multicolumn{2}{c|}{$ k=2$} &  \multicolumn{2}{c|}{$ k=3$}  \\
			\hline
			5	& 1/32 &	 1.73E-02	&	-- &	  4.37E-03	&	-- & 1.14E-03	&	-- \\
			6	& 1/64 &	 8.06E-03	&   1.10 &  1.17E-03	&	1.90 &     2.44E-04	& 2.22 \\
			7	& 1/128 &    3.29E-03	&	1.29 & 2.04E-04	&	2.52 &   2.05E-05	&	3.57\\
			8	& 1/256 &	 1.08E-03   &	1.61 &  2.78E-05	&	2.88 &    2.75E-06	&	2.90\\
			\hline

		\end{tabular}
		\label{table:deform_d2}
	\end{table}

	\begin{figure}[htp]
		\begin{center}
			\subfigure[]{\includegraphics[width=.42\textwidth]{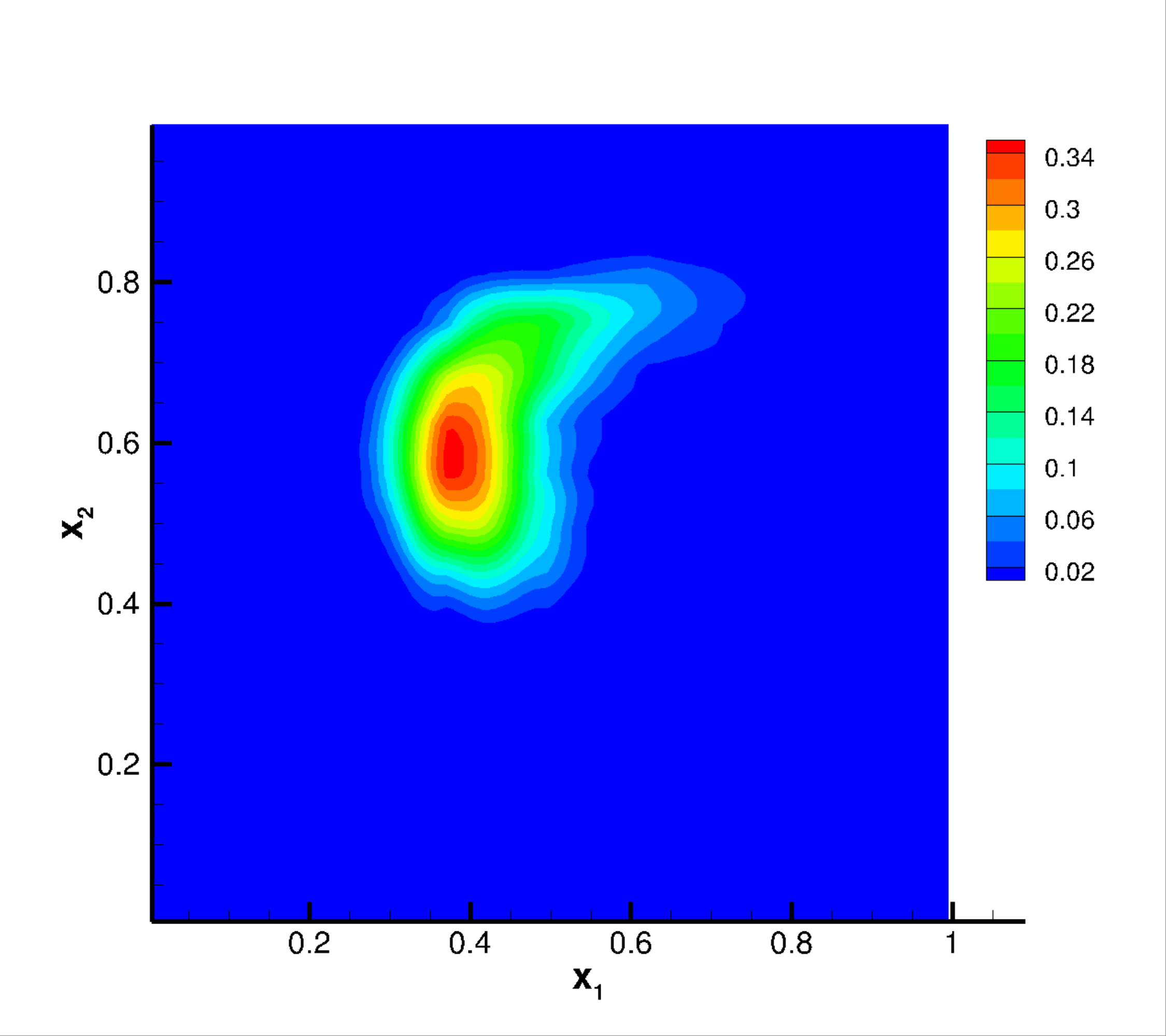}}
			\subfigure[]{\includegraphics[width=.42\textwidth]{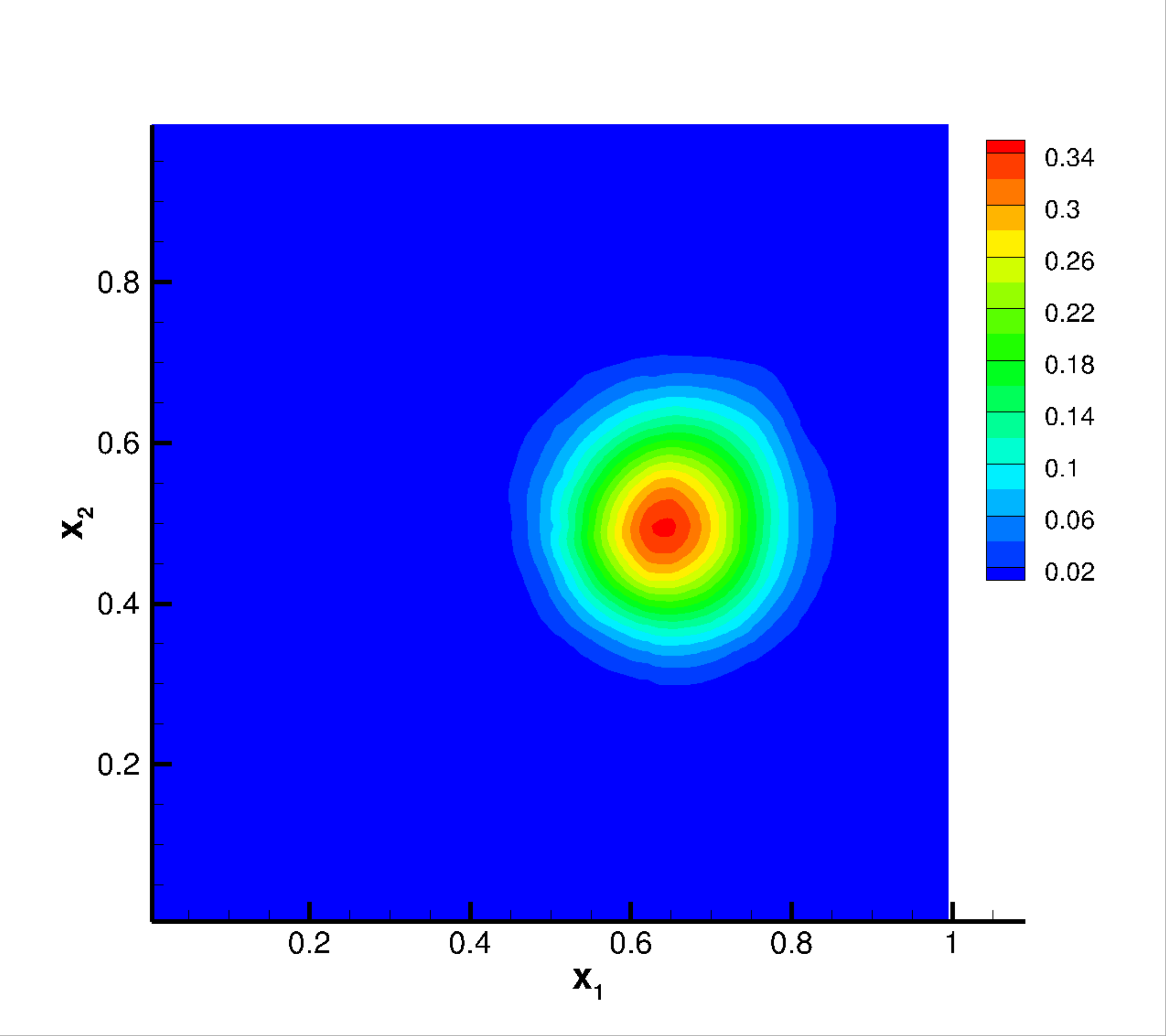}}\\
			\subfigure[]{\includegraphics[width=.42\textwidth]{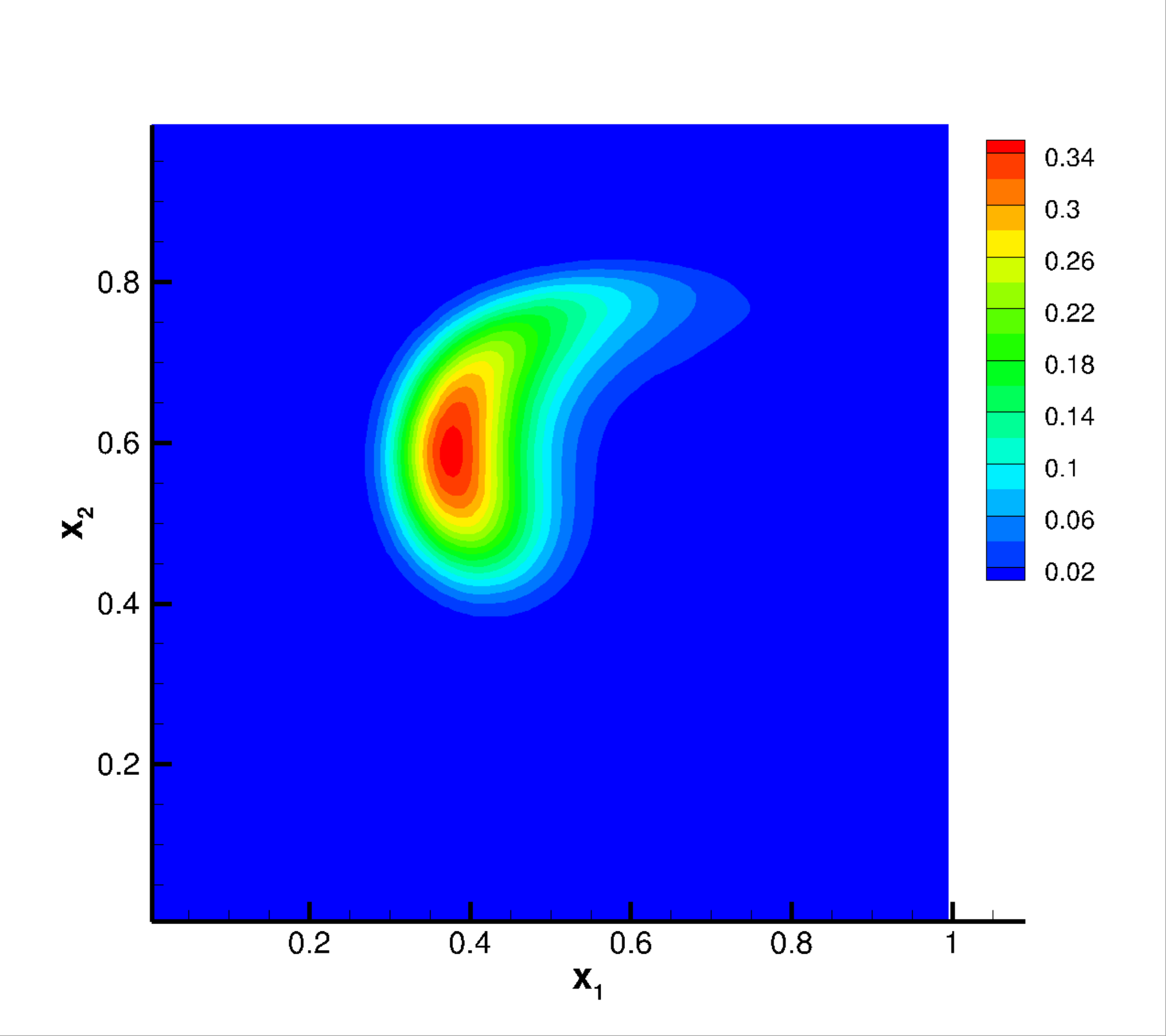}}
			\subfigure[]{\includegraphics[width=.42\textwidth]{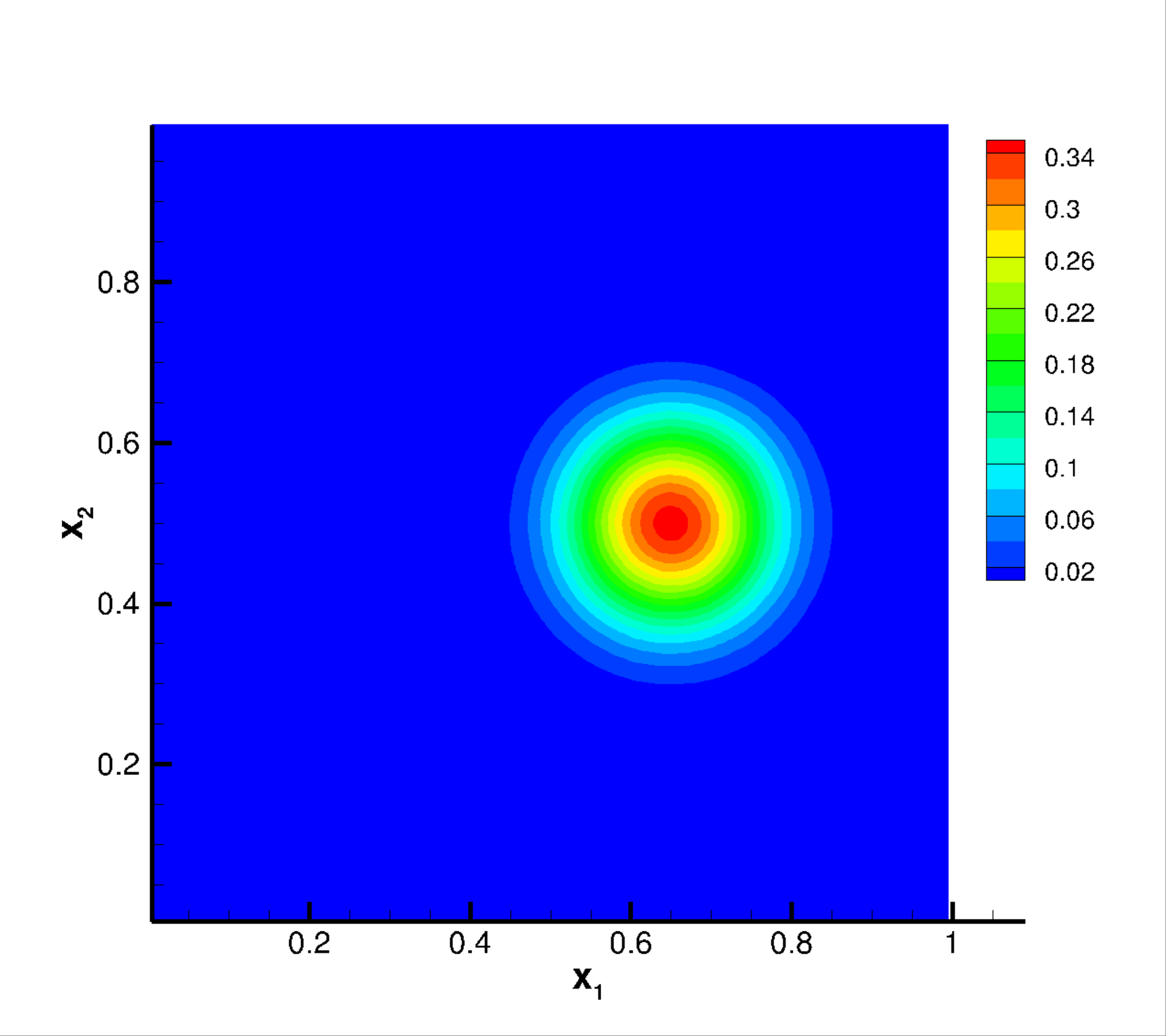}}\\
			\subfigure[]{\includegraphics[width=.42\textwidth]{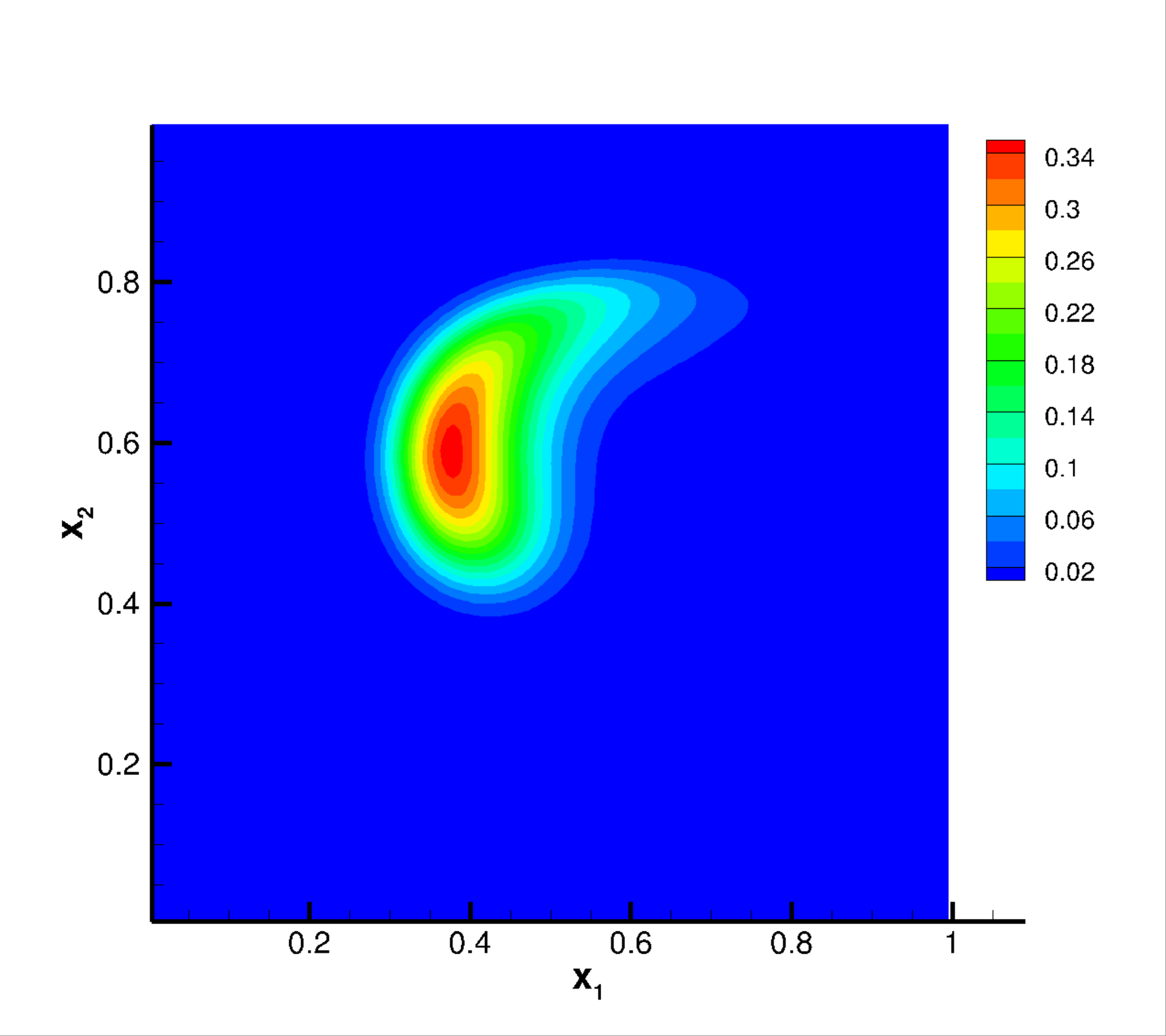}}
			\subfigure[]{\includegraphics[width=.42\textwidth]{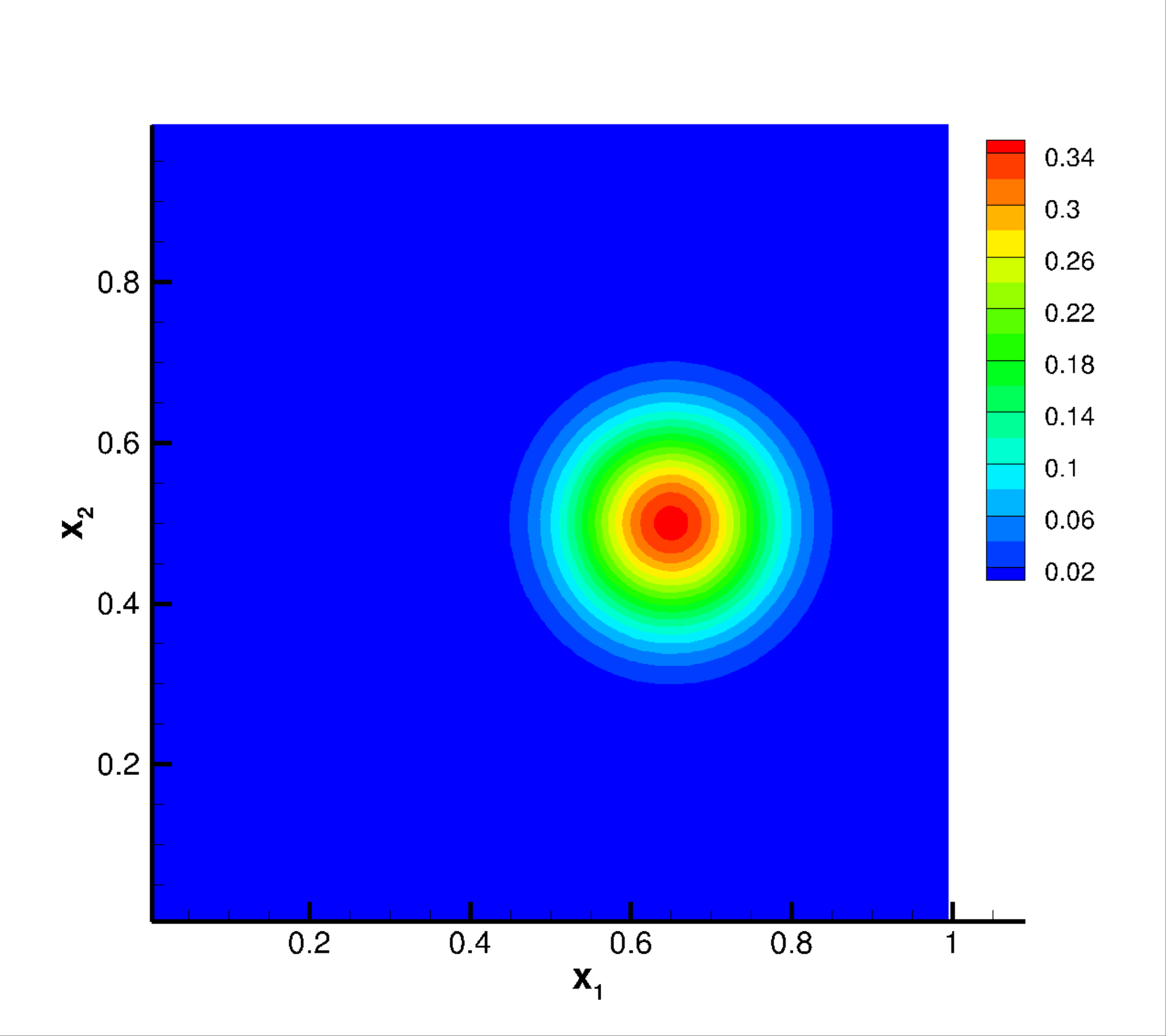}}
		\end{center}
		\caption{Example \ref{ex:deformational}. Deformational flow test. The contour plots of the numerical solutions on primal mesh  at $t=T/2$ (a, c, e) and $t=T$ (b, d, f). $k=1$ (a, b), $k=2$ (c, d), and $k=3$ (e, f). $N=7$. }
		\label{fig:defor}
	\end{figure}

\subsection{System case}
In this subsection, we consider system case, which means $m>1$ in  equation \eqref{eq:model0} or \eqref{eq:model}.

\begin{exa}[Acoustic wave equation with constant wave speed]
	\label{ex:system_Acoustic}
	We consider  
	\begin{equation}
	\left\{ \begin{aligned} 
	&u_t=\nabla \cdot \bv, \quad \bx\in[0,1]^2,\\
	&\bv_t=\nabla u, \\
	& u(0,\bx) = u_0(\bx), \quad \bv(0,\bx) = \bv_0(\bx).
	\end{aligned} \right. 
	\end{equation}	
%
%
	with periodic boundary conditions.
The initial conditions $u_0(\bx)$ and $\bv_0(\bx)$ are chosen according to the following two types of exact solutions: the standing wave
	$$
		\begin{bmatrix}
	u(t,\bx) 	\\ 
	v_1(t,\bx) 	\\
	v_2(t,\bx)
	\end{bmatrix}
	 = 
	\begin{bmatrix}
	-\sqrt 2  \sin(2\sqrt 2\pi t) \sin(2\pi x_1) \sin(2\pi x_2)	\\
	 \cos(2\sqrt 2\pi t) \cos(2\pi x_1) \sin(2\pi x_2)	\\
	 \cos(2\sqrt 2\pi t) \sin(2\pi x_1) \cos(2\pi x_2)	\\
	\end{bmatrix},
	$$
      and the traveling wave
	$$
		\begin{bmatrix}
	u(t,\bx) 	\\ 
	v_1(t,\bx) 	\\
	v_2(t,\bx)
	\end{bmatrix}
	 = 
	\begin{bmatrix}
	\sqrt 2 \sin(2\sqrt 2\pi t + 2\pi x_1)\cos (2\pi x_2)	\\
	 \sin(2\sqrt 2\pi t+ 2\pi x_1) \cos(2\pi x_2)	\\
	\cos(2\sqrt 2\pi t +2\pi x_1) \sin(2\pi x_2)	\\
	\end{bmatrix}.
	$$

%
\end{exa}

We compute the solution until $T=1.$ Similar to the scalar case, we present the $L^2$ errors and   orders of accuracy for $\bu(t,\bx) = \bmat u(t,\bx),	& v_1(t,\bx),	& v_2(t,\bx) \emat^T$ in Table \ref{Acoustic_wave}. 
From the table, we still observe at least $(k+1/2)$-th order for the solution.  
	
	\begin{table}[htp]
		
		\caption{$L^2$ errors and orders of accuracy for Example \ref{ex:system_Acoustic} at $T=1$. $N$ denotes mesh level, $h_N$ is the size of the smallest mesh in each direction, $k$ is the polynomial order, $d$ is the dimension.  $L^2$ order is calculated with respect to  $h_N$. $d=2$. 
		}
		\centering
		\begin{tabular}{|c|c|c c|c c|c c|}
			\hline
			& & $L^2$ error & order& $L^2$ error & order & $L^2$  error & order\\
			\hline
			$N$& $h_N$&   \multicolumn{2}{c|}{$ k=1$}    &     \multicolumn{2}{c|}{$ k=2$}  &     \multicolumn{2}{c|}{$ k=3$}  \\
			\hline
			& &       \multicolumn{6}{c|}{standing wave}     \\
			\hline
			3 &$1/8$  &  3.56E-01	&   --       &     1.05E-02	&	--     &	5.37E-04	&	--   \\
			4 &$1/16$ &	 7.93E-02	&	2.17     &     1.84E-03	&	2.51   &	4.31E-05	&	3.64   \\
			5 &$1/32$ &  1.50E-02	&	2.40     &	   3.18E-04	&	2.53   &	3.39E-06	&	3.67   \\
			6 &$1/64$ &	 3.72E-03	&	2.01     &	   4.95E-05	&	2.68   &	2.77E-07	&	3.61   \\
			7 &$1/128$&	 1.01E-03   &	1.88     &	   7.60E-06	&	2.70   &	2.03E-08	&	3.77  \\

			\hline
			& &       \multicolumn{6}{c|}{traveling wave}     \\
			\hline
			3 &$1/8$  &	 3.97E-01	&   --	     &     1.85E-02	&	--	   &	7.75E-04	&	--	     \\
			4 &$1/16$ &  8.58E-02	&	2.21     &     3.36E-03	&	2.46   &	6.76E-05	&	3.52   \\
			5 &$1/32$ &  1.97E-02	&	2.12     &     6.07E-04	&	2.47   &	5.68E-06	&	3.57   \\
			6 &$1/64$ &  5.36E-03	&	1.88     &     9.66E-05	&	2.65   &	4.44E-07	&	3.68  \\
			7 &$1/128$&	 1.50E-03    &	1.84     &     1.45E-05	&	2.74   &	3.39E-08	&	3.71  \\
			\hline
			
		\end{tabular}
		\label{Acoustic_wave}
	\end{table}
	
%
	
\begin{exa}[Two-dimensional homogeneous isotropic elastic wave \cite{kaser2006arbitrary}]
\label{ex:system_elastic_2d}
The 2D elastic wave equation in homogeneous and isotropic medium in velocity-stress formulation without external source,   is a linear hyperbolic system of the form
\beq
\label{eqn:elastic2d}
\bu _t + A_1 \bu_{x_1} + A_2 \bu_{x_2} =0,
\eeq
where $\bu = \bmat \sigma_{xx},	& \sigma_{yy},	& \sigma_{xy} ,	& v, 	& w \emat^T$, $\sigma_{xx}, \sigma_{yy}$ represents the normal stress and $\sigma_{xy}$ represents the shear stress and $v, w$ are the velocity in $x$ and $y$ directions. 
$$
A_1 = -\bmat		0	& 	0	& 	0	& 	\lambda + 2\mu 	& 	0	\\
			0	& 	0	& 	0	& 	\lambda 			& 	0	\\
			0	& 	0	& 	0	& 	0				& 	\mu	\\
	\frac{1}{\rho}	& 	0	& 	0	& 	0				& 	0	\\
			0	& 	0	& 	\frac{1}{\rho}	& 	0		& 	0	\\
	\emat 	,	\quad 
A_2 = -\bmat		0	& 	0	& 	0	& 	0		& \lambda 		\\
			0	& 	0	& 	0	& 	0 		& 	\lambda + 2\mu 	\\
			0	& 	0	& 	0	& 	\mu				& 	0	\\
			0	& 	0	& 	\frac{1}{\rho}	& 	0				& 	0	\\
			0	& 	\frac{1}{\rho}	& 	0	& 	0		& 	0	\\
	\emat ,
$$
where $\lambda$ and $\mu$ are the \textit{Lam\'e constants} and $\rho$ is the mass density of material. Eigenvalues of $A_1$ and $A_2$ are $-c_p, -c_s, 0, c_s, c_p$, which give us the wave speed $c_p = \sqrt{\frac{\lambda + 2\mu}{\rho}}$ and $c_s = \sqrt{\frac{\mu}{\rho}}$ for P-wave and S-wave respectively. 
We consider the homogeneous material parameters $\lambda = 2, \mu = 1, \rho = 1$, then $c_p = 2, c_s = 1$. 
On domain $\Omega = [0,1]^2$, we take the solutions consisting of a plane P-wave traveling along diagonal direction $\mathbf n = (\frac{\sqrt 2}{2}, \frac{\sqrt 2}{2})$ and a plane S-wave traveling in the opposite direction, i.e.,
$$
\bu (t,\bx) = \mathbf R_s e^{\sin(\mathbf{k} \cdot \bx + kc_s t)} + \mathbf R_p e^{\sin(\mathbf{k} \cdot \bx -kc_p t)},
$$
where $\mathbf R_s = [-\mu, \mu, 0, -\frac{\sqrt 2}{2}c_s, \frac{\sqrt 2}{2}c_s]^T, \mathbf R_p = [\lambda + \mu, \lambda + \mu, \mu, -\frac{\sqrt 2}{2}c_p, -\frac{\sqrt 2}{2}c_p]^T$ and $\mathbf{k} = k \mathbf n, k = {2\sqrt 2\pi}$. Periodic boundary condition is applied and the initial condition is chosen as $u(0,\bx)$.
\end{exa}

We compute the solution until $T=1.$ The $L^2$ errors and  orders of accuracy for $\bu(t,\bx)$ are shown in Table \ref{elastic_2d}.  We observe that the convergence order is close to $k+1$.

\begin{table}[htp]
	
	\caption{$L^2$ errors and orders of accuracy for Example \ref{ex:system_elastic_2d} at $T=1$. $N$ denotes mesh level, $h_N$ is the size of the smallest mesh in each direction, $k$ is the polynomial order, dimension $d=2$.  $L^2$ order is calculated with respect to  $h_N$. 
	}
	\centering
	\begin{tabular}{|c|c|c c|c c|c c|}		
		\hline
		& & $L^2$ error & order& $L^2$ error & order & $L^2$  error & order\\
		\hline
		$N$& $h_N$&   \multicolumn{2}{c|}{$ k=1$}    &     \multicolumn{2}{c|}{$ k=2$}  &     \multicolumn{2}{c|}{$ k=3$}  \\
		\hline
		4 &$1/16$ &  1.09E+00	&	--       &     2.72E-01	&	--     &	5.71E-02	&	--   \\
		5 &$1/32$ &  7.47E-01	&	0.55     &     6.48E-02	&	2.07   &	6.19E-03	&	3.21   \\
		6 &$1/64$ &  2.41E-01	&	1.63     &     9.65E-03	&	2.75   &	4.77E-04	&	3.70  \\
		7 &$1/128$&	 7.14E-02   &	1.76     &     1.12E-03	&	3.11   &	2.55E-05	&	4.23  \\
		\hline
		
	\end{tabular}
	\label{elastic_2d}
\end{table}

\begin{exa}[Three-dimensional isotropic elastic wave \cite{dumbser2006arbitrary}]
	\label{ex:system_elastic_3d}
We extend the previous example to 3D and obtain the following linear hyperbolic system
\beq
\label{eqn:elastic3d}
\bu _t + A_1 \bu_{x_1} + A_2 \bu_{x_2} + A_3 \bu_{x_3}=0,
\eeq
where $\bu = \bmat \sigma_{xx},	& \sigma_{yy},	&\sigma_{zz}, 	& \sigma_{xy}, 	& \sigma_{yz},	& \sigma_{xz},	& u, 	& v, 	& w \emat^T$, $\sigma$ is the stress tensor and $u, v, w$ are the velocities in each spatial direction.
$$
A_1 = -\bmat	0	&	0	&	0	&	0	&	0	&	0	&	\lambda + 2\mu	&	0	&	0	\\
		0	&	0	&	0	&	0	&	0	&	0	&	\lambda			&	0	&	0	\\
		0	&	0	&	0	&	0	&	0	&	0	&	\lambda			&	0	&	0	\\
		0	&	0	&	0	&	0	&	0	&	0	&	0				&	\mu	&	0	\\
		0	&	0	&	0	&	0	&	0	&	0	&	0				&	0	&	0	\\
		0	&	0	&	0	&	0	&	0	&	0	&	0				&	0	&	\mu	\\
\frac{1}{\rho}	&	0	&	0	&	0	&	0	&	0	&	0				&	0	&	0	\\
0	&	0	&	0	&	\frac{1}{\rho}	&	0	&	0	&	0				&	0	&	0	\\
0	&	0	&	0	&	0	&	0	&	\frac{1}{\rho}	&	0				&	0	&	0	\\
	\emat ,	\,
A_2 = -\bmat	0	&	0	&	0	&	0	&	0	&	0	&	0	&	\lambda			&	0	\\
		0	&	0	&	0	&	0	&	0	&	0	&	0	&	\lambda+2\mu	&	0	\\
		0	&	0	&	0	&	0	&	0	&	0	&	0	&	\lambda			&	0	\\
		0	&	0	&	0	&	0	&	0	&	0	&	\mu	&	0				&	0	\\
		0	&	0	&	0	&	0	&	0	&	0	&	0	&	0				&	\mu	\\
		0	&	0	&	0	&	0	&	0	&	0	&	0	&	0				&	0	\\
0	&	0	&	0	&	\frac{1}{\rho}	&	0	&	0	&	0	&	0				&	0	\\
0	&	\frac{1}{\rho}	&	0	&	0	&	0	&	0	&	0	&	0				&	0	\\
0	&	0	&	0	&	0	&	\frac{1}{\rho}	&	0	&	0	&	0				&	0	\\
	\emat ,	
$$
$$
A_3 = -\bmat	0	&	0	&	0	&	0	&	0	&	0	&	0	&	0	&	\lambda	\\
		0	&	0	&	0	&	0	&	0	&	0	&	0	&	0	&	\lambda	\\
		0	&	0	&	0	&	0	&	0	&	0	&	0	&	0	&	\lambda+2\mu	\\
		0	&	0	&	0	&	0	&	0	&	0	&	0	&	0	&	0	\\
		0	&	0	&	0	&	0	&	0	&	0	&	0	&	\mu	&	0	\\
		0	&	0	&	0	&	0	&	0	&	0	&	\mu	&	0	&	0	\\
		0	&	0	&	0	&	0	&	0	&	\frac{1}{\rho}	&	0	&	0	&	0	\\
		0	&	0	&	0	&	0	&	\frac{1}{\rho}	&	0	&	0	&	0	&	0	\\
		0	&	0	&	\frac{1}{\rho}	&	0	&	0	&	0	&	0	&	0	&	0	\\
	\emat ,
$$
where $\lambda, \mu$ and $\rho$ take the same values as the previous example. Hence, we have the same values for $c_p$ and $c_s$. Eigenvalues of $A_1,A_2$ and $A_3$ are $-c_p, -c_s, -c_s, 0, 0, 0, c_s, c_s, c_p$, which describe the wave speed for P-wave and S-wave (with different polarizations).
On domain $\Omega = [0,1]^3$, we take the solutions consisting of a plane S-wave traveling along diagonal direction $\mathbf n = (-\frac{1}{\sqrt 3}, -\frac{1}{\sqrt 3}, -\frac{1}{\sqrt 3})$ and a plane P-wave traveling in the opposite direction, i.e.,
$$
\bu (t,\bx) = \mathbf R_s \sin(\mathbf{k} \cdot \bx - k c_s t) + \mathbf R_p \sin( \mathbf{k} \cdot \bx + k c_p t),
$$
where 
\begin{align*}
\mathbf R_s & = [-\frac{2}{3}\mu, \frac{2}{3}\mu, 0, 0, \frac{1}{3}\mu, -\frac{1}{3}\mu, -\frac{1}{\sqrt 3}c_s, \frac{1}{\sqrt 3}c_s, 0]^T, \\
\mathbf R_p & = [\lambda + \frac{2}{3}\mu, \lambda + \frac{2}{3}\mu, \lambda + \frac{2}{3}\mu, \frac{2}{3}\mu, \frac{2}{3}\mu, 
\frac{2}{3}\mu, -\frac{1}{\sqrt 3}c_p, -\frac{1}{\sqrt 3}c_p, -\frac{1}{\sqrt 3}c_p]^T
\end{align*}
 and $\mathbf{k} = k \mathbf n, k = -{2\sqrt 3\pi }$. Similarly, we consider periodic boundary condition and $u_0(\bx) = u(0,\bx)$ as initial condition.
 \end{exa}
We present the numerical results at $T=1.$ In Table \ref{elastic_3d}, we get at least $(k+1/2)$-th order of accuracy for the solution $\bu(t, \bx)$.

\begin{table}[htp]
	
	\caption{$L^2$ errors and orders of accuracy for Example \ref{ex:system_elastic_3d} at $T=1$. $N$ denotes mesh level, $h_N$ is the size of the smallest mesh in each direction, $k$ is the polynomial order, $d$ is the dimension.  $L^2$ order is calculated with respect to  $h_N$. $d=3$. 
	}
	\centering
	\begin{tabular}{|c|c|c c|c c|c c|}
		\hline
		& & $L^2$ error & order& $L^2$ error & order & $L^2$  error & order\\
		\hline
		$N$& $h_N$&   \multicolumn{2}{c|}{$ k=1$}    &     \multicolumn{2}{c|}{$ k=2$}  &     \multicolumn{2}{c|}{$ k=3$}  \\
		\hline
		4 &$1/16$ &	 2.49E+00	&	--       &     4.93E-02	&	--     &	8.91E-04	&	--   \\
		5 &$1/32$ &  7.70E-01	&	1.69     &	   8.17E-03	&	2.59   &	8.66E-05	&	3.36   \\
		6 &$1/64$ &	 1.76E-01	&	2.13     &	   1.59E-03	&	2.36   &	7.12E-06	&	3.60   \\
		7 &$1/128$&	 4.27E-02   &	2.04     &	   2.79E-04	&	2.51   &	5.42E-07	&	3.72 	  \\

		\hline
		
	\end{tabular}
	\label{elastic_3d}
\end{table}

\section{Conclusions and future work}
\label{sec:conclu}

In this work, we develop sparse grid CDG schemes for linear transport problems. We construct sparse finite element space on primal and dual meshes for periodic and non-periodic problems. A new hierarchical representation of the piecewise polynomials is introduced and analyzed for non-periodic problems on the dual mesh. Compared with CDG scheme, the method is shown to be efficient for high dimensional problems. Compared with sparse grid DG scheme, the method proposed allows larger CFL numbers and avoid the evaluations of numerical fluxes. We show that for scalar equation with constant coefficients, the scheme shares similar $L^2$ stability property as the standard CDG scheme.   $L^2$ convergence rate is proved to be of $O(\abs{\log h}^d h^k)$ where $h$ is the most refined mesh in each direction. Numerical results are provided validating performance of the methods. In particular, the convergence order seems higher than the theoretically predicted rate, which suggests that new projection techniques  may be needed.
Other future work includes  detailed study of CFL conditions, and applications and extensions to nonlinear and nonsmooth problems.

\bibliographystyle{abbrv}
\bibliography{ref_sparse_VM,refer,cdg,ref_cheng,ref_cheng_2,n}

\end{document}